\newtheorem{theorem}{Theorem}
\newtheorem{proposition}{Proposition}
\newtheorem{lemma}{Lemma}
\newtheorem{corollary}{Corollary}
\newtheorem{definition}{Definition}
\theoremstyle{remark}
\newtheorem{remark}{Remark}
\newcommand{\C}{\mathbb{C}}
\newcommand{\disk}{\mathbb{D}}
\newcommand{\D}{\Omega}
\newcommand{\ep}{\varepsilon}
\newcommand{\ab}{\overline{a}}
\newcommand{\wb}{\overline{w}}
\title{On convergence of the Berezin transforms}
\author{N\.{i}hat G\"{o}khan G\"{o}\u{g}\"{u}\c{s}}
\address{Sabanc\i{} University,
Tuzla, 34956, Istanbul, Turkey}
\email{nggogus@sabanciuniv.edu}
\thanks{This work was completed with the support of a 
	TUBITAK project with project number 118F405.}
\author{S\"{o}nmez \c{S}ahuto\u{g}lu}
\address{University of Toledo, Department of
Mathematics \& Statistics, Toledo, OH 43606, USA}
\email{sonmez.sahutoglu@utoledo.edu}
\subjclass{Primary 47B35; Secondary 32A25}
\keywords{Bergman kernel, Berezin transform, Ramadanov's Theorem}
\date{\today}
\begin{document}

\begin{abstract}
We prove approximation results about sequences of Berezin transforms
of finite sums of finite product of Toeplitz operators (and bounded linear
maps, in general)  in the spirit of  Ramadanov and Skwarczy\'nski Theorems
that are about convergence of Bergman kernels.
\end{abstract}

\maketitle
%%%%%%%%%%%%%%%%%%%%%%%%%%%%%%%%%%%%%%%
%%%%%%%%%%%%%%%%%%%%%%%%%%%%%%%%%%%%%%%

Let $\D$ be a domain in $\C^n$ and $A^2(\D)$ denote the Bergman space,
the set of square integrable holomorphic functions, of $\D$. Since the
Bergman space $A^2(\D)$ is a closed subspace of $L^2(\D)$, there exists
a bounded orthogonal projection $P_{\D}$ from $L^2(\D)$ onto $A^2(\D)$.
This is called the Bergman projection for $\D$. We denote the Bergman
kernel of $\D$ by $K^{\D}$. The Berezin transform $B_{\D}T$ of
a bounded linear operator $T$ on $A^2(\D)$ is defined as
\[B_{\D}T(z)=\langle Tk^{\D}_z,k^{\D}_z\rangle,\]
where $k^{\D}_z(\xi)=K^{\D}(\xi,z)/\sqrt{K^{\D}(z,z)}$ is the
normalized Bergman kernel of $\D$ and $\langle .,.\rangle$ denotes the
inner product on $A^2(\D)$.

Berezin transform is an important notion in operator theory. For instance,
it is used to characterize compactness  of operators in the Toeplitz algebra
on the unit disc and the unit ball (see \cite{AxlerZheng98,Suarez07}) and
in a subalgebra on more general domains in $\C^n$
(see \cite{CuckovicSahutoglu13,CuckovicSahutogluZeytuncu18}).
Berezin transform is also an important tool in the characterization
of compactness of the Hankel operators in \cite{BBCZ90}.

There are different notions for convergence of operators on $A^2(\D)$.
For instance,  one can ask if a sequence of bounded operators defined
on the same Bergman space  converges to a bounded operator in the
operator norm or in the weak sense. Now assume that, for each $j$, $T_j$ is a
bounded operator on $A^2(\D_j)$  and $\D_j\subset \D$ (or $\D\subset \D_j$).
Since the operators $T_j$s are defined on different spaces
it does not make sense to talk about  convergence of $\{T_j\}$
in norm or weakly. However, we can compare Berezin transforms.
That is, we can ask if $\{B_{\D_j}T_j\}$ converges to $B_{\D}T$ pointwise,
locally uniformly, etc. This notion generalizes the weak
convergence of operators because  $B_{\D}T_j\to B_{\D}T$ pointwise
on $\D$ whenever $T_j$s are defined on $A^2(\D)$ and $T_j\to T$ weakly.

 Let $\{\D_j\}$ be an increasing sequence of domains whose
 union is $\D$. Ramadanov showed that (see
 \cite{Ramadanov67,Ramadanov83}) the Bergman kernels
$\{K^{\D_j}\}$  converge to $K^{\D}$ uniformly on compact subsets of
$\D\times \D$.  In this paper we prove results in the spirit of Ramadanov's
result for Berezin transforms of bounded operators on the Bergman space.

The plan of the paper is as follows: In the next section we will state our
main results. The proofs will be presented in the following section.

%%%%%%%%%%%%%%%%%%%%%%%%%%%%%%%%%%%%
\section{Main Results}
To state our results we need to define the restriction operator.
Let $U\subset \D$ be domain in $\C^n$ and $R^{\D}_U:A^2(\D)\to A^2(U)$
denote the restriction operator. That is, $R^{\D}_Uf=f|_U$. Then
the adjoint $R^{\D *}_U:A^2(U)\to A^2(\D)$ of $R^{\D}_U$ is a bounded linear map
and one can show that (see, for example, \cite{ChakrabartiSahutogluPreprint})
\[R^{\D *}_Uf(z)=\int_UK^{\D}(z,w)f(w)dV(w),\]
where $dV$ is the Lebesgue measure in $\C^n$. We note that
if $\overline{U}\subset\D$, then Montel's Theorem implies
that $R^{\D}_U$ is compact. Also $R_U^*TR_U$ is a bounded linear operator
on $A^2(\D)$ whenever $T$ is a bounded linear map on $A^2(U)$.

Throughout this paper $Ef$ denotes the extension of $f$ onto $\C^n$
trivially by zero and $R_U$ will denote $R_U^{\D}$ when the domain
$\D$ is clear from the context. Then the formula for $R^{\D *}_U$
above is $R^{\D *}_U=P_{\D}E$.

For $z,$ $w\in \D$, let $K_z^{\D}(w)=K^{\D}(w,z)$.
Notice that the normalized Bergman kernel $k_z^{\D}$ is well-
defined whenever $K^{\D}(z,z)\neq 0$. In \cite{Englis07}, Engli\v{s}
observes that there are unbounded domains in $\mathbb{C}^{n}$ for
which the zero set $\mathcal{Z}$ of the Bergman kernel on the
diagonal $K^{\D}(z,z)$ is not empty. Namely, we denote
\[ \mathcal{Z}=\left\{ z\in \D :K^{\D}(z,z)=0\right\}.\]
\begin{definition}
A domain $\D$ in $\C^n$ is called a non-trivial Bergman domain if
$A^2(\D)\neq \{0\}$.
\end{definition}
We note that $\D$ is a non-trivial Bergman domain if and only if
$\mathcal{Z}\neq \D$. If $\D$ is bounded, then $\mathcal{Z}$ is
empty because the constant functions
belong to $A^2(\D)$ and $K^{\D}(z,z) \geq 1/\|1\|^2>0$ for all $z\in \D$.
Therefore, bounded domains are non-trivial Bergman domains as well.
The set $\mathcal{Z}$, if not empty and not equal to $\D$,
is a real-analytic variety in $\D$ with zero Lebesgue measure and it is a
relatively closed subset of $\D$. The normalized Bergman kernel
$k^{\D}_z$ is a well defined function in $A^2(\D)$ for $z\in \D\setminus \mathcal{Z}$.
In this paper we will always assume that $\D$ is a non-trivial Bergman domain.

In the example given in \cite{Englis07}, there exists a bounded function $\phi$ on
an unbounded pseudoconvex complete Reinhardt domain
$\D$ such that the Berezin transform $B_{\D}T_{\phi}$ of the (bounded) Toeplitz
operator on $\D$ has a singularity at a point in $\mathcal{Z}$. However, the map
$z\mapsto k^{\D}_z$ is continuous from $\D\backslash \mathcal{Z} $ to
$L^2(\D)$ since
\begin{align}\label{Eqn0}
\left\| k^{\D}_z-k^{\D}_w\right\|_{L^2(\D)}^2
=2-2\text{Re}\left\langle k^{\D}_z,k^{\D}_{w}\right\rangle
=2-2\text{Re}\frac{K^{\D}(w,z)}{\sqrt{K^{\D}(z,z)}\sqrt{K^{\D}(w,w)}}
\end{align}
and both $K^{\D}(w,z)$ and $K^{\D}(w,w)$ converge to
$K^{\D}(z,z)$ as $w$ converges to $z$ in $\D\backslash \mathcal{Z}$.
Hence, the Berezin transform $B_{\D}T$ of a bounded operator $T$ on $A^2(\D)$
is always a well-defined, bounded and continuous function,
on $\D\backslash \mathcal{Z}$. This can be seen from the
inequality $|B_{\D}T(z)|=|\langle Tk^{\D}_z,k^{\D}_z\rangle |\leq \|T\|$ and
\begin{align*}
\left|B_{\D}T(z)-B_{\D}T(w)\right|
\leq &\left|\langle Tk_z,k^{\D}_z-k^{\D}_w\rangle\right|
+\left|\langle T(k^{\D}_z-k^{\D}_w),k^{\D}_w\rangle\right| \\
\leq &\left\|T\right\| \left\|k^{\D}_z-k^{\D}_w\right\|_{L^2(\D)}
+\left\| T(k^{\D}_z-k^{\D}_w)\right\|_{L^2(\D)}
\end{align*}
for every $z$, $w\in\D\backslash \mathcal{Z}$.

Our first two  results below  can be seen as  analogues of
Ramadanov's and Skwarczy\'nski's Theorems.

\begin{theorem}\label{ThmBounded}
Let $\{\D_j\}$ be a sequence of  domains in $\C^n$ such that
$\D_j\subset \D_{j+1}$ for all $j$ and
$\D=\cup_{j=1}^{\infty}\D_j$ be a non-trivial Bergman domain.
Let $T$ be a bounded linear map on $A^2(\D)$. Then
$B_{\D_j}R_{\D_j}TR^*_{\D_j}\to B_{\D}T$ uniformly on compact
subsets of $\D\backslash \mathcal{Z}$ as $j\to \infty$.
Furthermore, if $\D$ is bounded, then
$EB_{\D_j}R_{\D_j}TR^*_{\D_j}\to B_{\D}T$ in $L^p(\D)$ as
$j\to \infty$ for all $0<p<\infty$.
\end{theorem}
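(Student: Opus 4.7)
My plan is to reduce $B_{\D_j}(R_{\D_j}TR^*_{\D_j})$ to an explicit scalar multiple of $B_{\D}T$ and then apply Ramadanov's theorem. First, using that $R^*_{\D_j}$ is the adjoint of $R_{\D_j}$, for any $z\in\D_j\setminus\mathcal{Z}$,
\[
B_{\D_j}(R_{\D_j}TR^*_{\D_j})(z)=\langle R_{\D_j}TR^*_{\D_j}k^{\D_j}_z,k^{\D_j}_z\rangle_{A^2(\D_j)}=\langle TR^*_{\D_j}k^{\D_j}_z,R^*_{\D_j}k^{\D_j}_z\rangle_{A^2(\D)}.
\]
The key identity is $R^*_{\D_j}K^{\D_j}_z=K^{\D}_z$ for $z\in\D_j$: for every $f\in A^2(\D)$, the adjoint relation together with the reproducing property of $K^{\D_j}_z$ yields
\[
\langle f,R^*_{\D_j}K^{\D_j}_z\rangle_{A^2(\D)}=\langle R_{\D_j}f,K^{\D_j}_z\rangle_{A^2(\D_j)}=(R_{\D_j}f)(z)=f(z),
\]
which is precisely the defining property of $K^{\D}_z$, so by uniqueness of the Riesz representer $R^*_{\D_j}K^{\D_j}_z=K^{\D}_z$. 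Normalizing, $R^*_{\D_j}k^{\D_j}_z=\sqrt{K^{\D}(z,z)/K^{\D_j}(z,z)}\,k^{\D}_z$.

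Substituting back yields the clean formula
\[
B_{\D_j}(R_{\D_j}TR^*_{\D_j})(z)=\frac{K^{\D}(z,z)}{K^{\D_j}(z,z)}\,B_{\D}T(z),\qquad z\in\D_j\setminus\mathcal{Z}.
\]
Ramadanov's theorem then gives $K^{\D_j}(z,z)\searrow K^{\D}(z,z)$ uniformly on compact subsets of $\D$. On any compact $L\subset\D\setminus\mathcal{Z}$ the positive continuous function $z\mapsto K^{\D}(z,z)$ is bounded below, so the ratio tends to $1$ uniformly on $L$. Combined with $|B_{\D}T|\le\|T\|$, this gives uniform convergence $B_{\D_j}(R_{\D_j}TR^*_{\D_j})\to B_{\D}T$ on compact subsets of $\D\setminus\mathcal{Z}$.

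For the $L^p$ statement, $\D$ bounded forces $\mathcal{Z}=\emptyset$ since $1\in A^2(\D)$. The restriction map $A^2(\D)\to A^2(\D_j)$ has norm at most $1$, so the extremal characterization of the Bergman kernel (which also underlies the monotonicity in Ramadanov) gives $K^{\D_j}(z,z)\ge K^{\D}(z,z)$. Hence the ratio in the displayed formula is at most $1$, and $|EB_{\D_j}(R_{\D_j}TR^*_{\D_j})|\le\|T\|$ on $\D$. Pointwise convergence on $\D$ together with $|\D|<\infty$ then gives $L^p(\D)$ convergence for every $0<p<\infty$ by the dominated convergence theorem. The only nontrivial input is Ramadanov's theorem; once the identity $R^*_{\D_j}K^{\D_j}_z=K^{\D}_z$ is recognized, the rest is essentially bookkeeping around the reproducing property, so I do not foresee a genuine obstacle.
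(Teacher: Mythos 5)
Your proposal is correct and follows essentially the same route as the paper: the identity $R^*_{\D_j}K^{\D_j}_z=K^{\D}_z$ is the paper's Lemma~\ref{Lem:Adjoint}, the resulting formula $B_{\D_j}(R_{\D_j}TR^*_{\D_j})=\frac{K^{\D}(\cdot,\cdot)}{K^{\D_j}(\cdot,\cdot)}B_{\D}T$ is Lemma~\ref{LemSeries}, and the conclusion then follows from Ramadanov's Theorem and dominated convergence exactly as in the paper.
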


\begin{theorem}\label{ThmDecreasing}
Let $\D$ be a non-trivial Bergman domain and $\{\D_j\}$ be a sequence of
domains in $\C^n$ such that $\D\subset\D_{j+1}\subset \D_j$
for all $j$. Assume $K^{\D_j}(z,z)\to K^{\D}(z,z)$ as $j\to\infty$
for every $z\in\D$.  Let $T$ be a bounded linear map on $A^2(\D)$. Then
$B_{\D_j}(R^{\D_j}_{\D})^*TR^{\D_j}_{\D}\to B_{\D}T$ uniformly
on compact  subsets of $\D\backslash \mathcal{Z}$  as $j\to \infty$.
Furthermore, if $\D$ is bounded, then
$B_{\D_j}(R^{\D_j}_{\D})^*TR^{\D_j}_{\D}\to B_{\D}T$ in $L^p(\D)$
as  $j\to \infty$ for all $0<p<\infty$.
\end{theorem}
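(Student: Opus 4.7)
The plan is to reduce convergence of the Berezin transforms to norm convergence in $A^2(\D)$ of the restricted normalized kernels. Writing $R_j=R^{\D_j}_{\D}$, the identity $\langle R_j^*TR_jf,g\rangle_{A^2(\D_j)}=\langle TR_jf,R_jg\rangle_{A^2(\D)}$ gives, for $z\in\D$,
\[
B_{\D_j}(R_j^*TR_j)(z)=\langle T(R_jk^{\D_j}_z),R_jk^{\D_j}_z\rangle_{A^2(\D)}.
\]
Comparing with $B_{\D}T(z)=\langle Tk^{\D}_z,k^{\D}_z\rangle_{A^2(\D)}$ and using the standard bilinear bound $|\langle Tu,u\rangle-\langle Tv,v\rangle|\le\|T\|(\|u\|+\|v\|)\|u-v\|$, my task is reduced to proving that $R_jk^{\D_j}_z\to k^{\D}_z$ in $A^2(\D)$ pointwise for $z\in\D\setminus\mathcal{Z}$, uniformly on every compact subset.

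To analyze $\|R_jk^{\D_j}_z-k^{\D}_z\|_{A^2(\D)}^2$ I compute its pieces separately. For the cross term, $K^{\D_j}(\cdot,z)\in A^2(\D_j)$ restricts to an element of $A^2(\D)$, and the reproducing property of $K^{\D}$ yields
\[
\langle R_jk^{\D_j}_z,k^{\D}_z\rangle_{A^2(\D)}=\sqrt{\frac{K^{\D_j}(z,z)}{K^{\D}(z,z)}}.
\]
Since $\D\subset\D_j$, integration over a smaller set gives $\|R_jk^{\D_j}_z\|_{A^2(\D)}^2\le\|k^{\D_j}_z\|_{A^2(\D_j)}^2=1$, while Cauchy--Schwarz against $k^{\D}_z$ yields the reverse estimate $\|R_jk^{\D_j}_z\|_{A^2(\D)}^2\ge K^{\D_j}(z,z)/K^{\D}(z,z)$. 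Expanding the squared difference and invoking the hypothesis $K^{\D_j}(z,z)\to K^{\D}(z,z)$ then gives pointwise convergence on $\D\setminus\mathcal{Z}$.

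The main point is to upgrade this to uniform convergence on a compact set $L\subset\D\setminus\mathcal{Z}$, and here the key observation is that $\{K^{\D_j}(z,z)\}$ is monotone nondecreasing in $j$: the restriction $A^2(\D_j)\to A^2(\D_{j+1})$ is a contraction preserving values at $z$, so the sup characterization of the diagonal kernel forces $K^{\D_j}(z,z)\le K^{\D_{j+1}}(z,z)\le K^{\D}(z,z)$. Since each $K^{\D_j}(z,z)$ and the limit $K^{\D}(z,z)$ are continuous, and $K^{\D}(z,z)$ is bounded below by a positive constant on $L$, Dini's theorem yields uniform convergence of $K^{\D_j}(z,z)/K^{\D}(z,z)$ to $1$ on $L$, whence $\|R_jk^{\D_j}_z-k^{\D}_z\|_{A^2(\D)}\to 0$ uniformly on $L$, and the bilinear estimate closes the first part. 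For the $L^p$ statement, boundedness of $\D$ forces $\mathcal{Z}=\emptyset$, so pointwise convergence holds everywhere on $\D$; combined with the uniform bound $|B_{\D_j}(R_j^*TR_j)|,\,|B_{\D}T|\le\|T\|$ and the finite volume of $\D$, dominated convergence delivers $L^p$ convergence for every $0<p<\infty$. I expect the main obstacle to be precisely this passage from pointwise to uniform convergence; without the monotonicity of the diagonal kernels, Dini would not apply and a substantially different input would be required.
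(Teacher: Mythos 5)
Your proof is correct, and it reaches the paper's key intermediate fact --- that $\sup_{z\in L}\|R^{\D_j}_{\D}k^{\D_j}_z-k^{\D}_z\|_{L^2(\D)}\to 0$ on compact $L\subset\D\setminus\mathcal{Z}$ --- by a genuinely different and more self-contained route. The paper (Lemma \ref{LemL2Conv}) invokes Skwarczy\'nski's theorem to get locally uniform convergence of the full kernels on $\D\times\D$, then runs a compactness/tail-mass argument: it picks a compact $S\subset\D$ carrying all but $\ep$ of the mass of $k^{\D}_z$ uniformly in $z$, shows $k^{\D_j}_z$ also has small mass off $S$, and compares on $S$. You instead compute the cross term exactly via the reproducing property, getting $\langle R_jk^{\D_j}_z,k^{\D}_z\rangle=\sqrt{K^{\D_j}(z,z)/K^{\D}(z,z)}$, which together with $\|R_jk^{\D_j}_z\|_{A^2(\D)}\le 1$ yields the clean bound
\[
\|R_jk^{\D_j}_z-k^{\D}_z\|_{A^2(\D)}^2\le 2\left(1-\sqrt{\tfrac{K^{\D_j}(z,z)}{K^{\D}(z,z)}}\right),
\]
so everything reduces to the diagonal hypothesis, upgraded to uniform convergence by monotonicity of $K^{\D_j}(z,z)$ in $j$ plus Dini. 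In effect you re-prove inline the relevant half of the Iwi\'nski--Skwarczy\'nski argument rather than citing it; your version buys a shorter, quantitative estimate with no auxiliary exhaustion, while the paper's version reuses machinery it already needs elsewhere (the same tail-over-$S$ technique reappears in the proof of Theorem \ref{ThmLinfty}). The remaining assembly --- the bilinear estimate $|\langle Tu,u\rangle-\langle Tv,v\rangle|\le\|T\|(\|u\|+\|v\|)\|u-v\|$ and dominated convergence for the $L^p$ claim --- matches the paper. One point worth making explicit: for small $j$ the quantity $K^{\D_j}(z,z)$ could vanish and $k^{\D_j}_z$ be undefined; since it increases to $K^{\D}(z,z)>0$ uniformly on $L$ by your Dini step, this only affects finitely many $j$, as the paper also notes.
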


The next result describes the convergence of the Berezin
transforms when the symbols of Toeplitz operators are restricted
onto the subdomains. To clarify the notation below, $\phi|_U$ denotes
the restriction of $\phi$ onto $U, R_U\phi$.

\begin{theorem}\label{ThmLinfty}
Let $\{\D_j\}$ be a sequence of domains in $\C^n$ such that
$\D_j\subset \D_{j+1}$ for all $j$ and $\D=\cup_{j=1}^{\infty}\D_j$
be a non-trivial Bergman domain.
Assume that  $T=\sum_{m=1}^lT_{\phi_{m,1}}\cdots T_{\phi_{m,k_m}}$
is a finite sum of finite products of Toeplitz operators with
bounded symbols on $\D$ and
$T^{\D_j} =\sum_{m=1}^lT_{\phi_{m,1}|_{\D_j}}\cdots T_{\phi_{m,k_m}|_{\D_j}}$
for each $j$. Then $B_{\D_j}T^{\D_j}\to B_{\D}T$ uniformly on compact
subsets of $\D\setminus \mathcal{Z}$  as $j\to \infty$. Furthermore, 
if $\D$ is bounded, then $EB_{\D_j}T^{\D_j}\to B_{\D}T$ in $L^p(\D)$ as
$j\to \infty$ for all $0<p<\infty$.
\end{theorem}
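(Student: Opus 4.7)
The plan is to reduce to a single product of Toeplitz operators and then run an inductive weak-convergence argument in $L^2(\D)$ that bridges the Bergman projections $P_{\D_j}$ with $P_{\D}$. By linearity of the Berezin transform it suffices to prove the claim for one product $T=T_{\phi_1}\cdots T_{\phi_k}$ with $T^{\D_j}=T_{\phi_1|_{\D_j}}\cdots T_{\phi_k|_{\D_j}}$. Fix $z\in\D\setminus\mathcal{Z}$, take $j$ large enough that $z\in\D_j$ and $K^{\D_j}(z,z)>0$, and set $F_0^{(j)}=k^{\D_j}_z$, $F_i^{(j)}=T_{\phi_{k-i+1}|_{\D_j}}F_{i-1}^{(j)}\in A^2(\D_j)$, and the analogous $F_0=k^{\D}_z$, $F_i=T_{\phi_{k-i+1}}F_{i-1}\in A^2(\D)$. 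Then
\[
B_{\D_j}T^{\D_j}(z)=\langle EF_k^{(j)},EF_0^{(j)}\rangle_{L^2(\D)},\qquad B_{\D}T(z)=\langle F_k,F_0\rangle_{L^2(\D)},
\]
so the target is to show $EF_i^{(j)}\rightharpoonup F_i$ weakly in $L^2(\D)$ for every $i$, with strong convergence at $i=0$; pairing weak with strong in the inner product at $i=k$ then yields pointwise convergence of the Berezin transforms.

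The base case $Ek^{\D_j}_z\to k^{\D}_z$ strongly in $L^2(\D)$ follows from Ramadanov's Theorem, which gives locally uniform convergence of $K^{\D_j}(\cdot,z)/\sqrt{K^{\D_j}(z,z)}$ to $k^{\D}_z$ on $\D$, together with the fact that both sides have unit $L^2$ norm (weak convergence plus equality of norms forces strong convergence). The inductive step rests on the following lemma, which I would prove first: if $\{g_j\}\subset L^2(\D)$ satisfies $\operatorname{supp}g_j\subseteq\D_j$ and $g_j\rightharpoonup g$ weakly in $L^2(\D)$, then $EP_{\D_j}(g_j|_{\D_j})\rightharpoonup P_{\D}g$ weakly in $L^2(\D)$. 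Writing $u_j=P_{\D_j}(g_j|_{\D_j})\in A^2(\D_j)$, the bound $\|u_j\|_{L^2(\D_j)}\leq\|g_j\|_{L^2(\D)}$ together with holomorphy of $u_j$ on $\D_j$ permits a Montel--diagonal extraction of a subsequence along which $Eu_j$ converges locally uniformly on $\D$ to some $u\in A^2(\D)$, hence weakly in $L^2(\D)$. For any $v\in A^2(\D)$ the identities
\[
\langle u_j,v|_{\D_j}\rangle_{L^2(\D_j)}=\langle P_{\D_j}(g_j|_{\D_j}),v|_{\D_j}\rangle=\langle g_j|_{\D_j},v|_{\D_j}\rangle=\langle g_j,v\rangle_{L^2(\D)}
\]
(using $v|_{\D_j}\in A^2(\D_j)$ and $\operatorname{supp}g_j\subseteq\D_j$) give $\langle u,v\rangle=\lim_j\langle g_j,v\rangle=\langle g,v\rangle=\langle P_{\D}g,v\rangle$, forcing $u=P_{\D}g$; uniqueness of the weak limit promotes the subsequence statement to the full sequence.

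The induction then proceeds: assuming $EF_{i-1}^{(j)}\rightharpoonup F_{i-1}$, boundedness of multiplication by $\phi_{k-i+1}$ on $L^2(\D)$ yields $\phi_{k-i+1}\cdot EF_{i-1}^{(j)}\rightharpoonup\phi_{k-i+1}F_{i-1}$, still supported in $\D_j$, whose restriction to $\D_j$ is $\phi_{k-i+1}|_{\D_j}F^{(j)}_{i-1}$; the lemma then gives $EF_i^{(j)}=EP_{\D_j}(\phi_{k-i+1}|_{\D_j}F_{i-1}^{(j)})\rightharpoonup P_{\D}(\phi_{k-i+1}F_{i-1})=F_i$. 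Pairing $EF_k^{(j)}\rightharpoonup F_k$ with $EF_0^{(j)}\to F_0$ delivers pointwise convergence of $B_{\D_j}T^{\D_j}$ to $B_{\D}T$ on $\D\setminus\mathcal{Z}$. To upgrade to uniform convergence on a compact $K\subset\D\setminus\mathcal{Z}$, I would argue by contradiction: any failure yields a diagonal sequence $(j_n,z_n)$ with $z_n\to z_\infty\in K$; the base case $Ek^{\D_{j_n}}_{z_n}\to k^{\D}_{z_\infty}$ strongly in $L^2(\D)$ still holds because $K^{\D}(z_\infty,z_\infty)>0$ and Bergman kernels converge locally uniformly on $\D\times\D$, and the same induction produces $B_{\D_{j_n}}T^{\D_{j_n}}(z_n)\to B_{\D}T(z_\infty)$, contradicting the assumed failure. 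For bounded $\D$ one has $\mathcal{Z}=\emptyset$ and $\|T^{\D_j}\|\leq\sum_m\prod_i\|\phi_{m,i}\|_\infty$, so $EB_{\D_j}T^{\D_j}$ is uniformly bounded on the finite-measure domain $\D$, and dominated convergence gives the $L^p$ statement for every $0<p<\infty$.

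The main obstacle is the key lemma: $P_{\D_j}$ and $P_{\D}$ act on different Hilbert spaces, and only kernel-level (Ramadanov) convergence is directly available, so passing to weak convergence of projections of arbitrary (non-holomorphic) $L^2$ data supported in $\D_j$ requires combining Montel's Theorem for the holomorphic image with a weak-compactness extraction and the identification-via-$A^2(\D)$-test-functions step above. Once this bridge is in place, the rest of the theorem is a straightforward assembly of weak/strong convergence arguments.
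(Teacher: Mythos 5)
Your argument is correct, but it follows a genuinely different route from the paper's. The paper proves, by induction on the number of Toeplitz factors, that $\sup\{|T^{\D_j}k_z^{\D_j}(w)-Tk_z^{\D}(w)|:z,w\in K\}\to 0$: each inner product $\langle \phi\,(\cdots),K^{\D_j}(\cdot,w)\rangle$ is split into an integral over a fixed compact $S\subset\D$, where Ramadanov's theorem gives uniform convergence of the kernels, and a tail over the complement, which is uniformly small because $\|k_z^{\D_j}\|_{L^2(\D_j\setminus S)}$ can be made small uniformly in $z\in K$ for large $j$; local uniformity in $z$ then comes for free from the identity \eqref{Eqn4}. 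You instead work at the level of weak convergence in $L^2(\D)$: your key lemma --- that $EP_{\D_j}(g_j|_{\D_j})\rightharpoonup P_{\D}g$ whenever $g_j\rightharpoonup g$ with $\operatorname{supp}g_j\subseteq\D_j$ --- is a weak-operator-topology analogue of Ramadanov's theorem for Bergman projections, and it makes the induction step essentially formal, since bounded multiplication operators preserve weak convergence; pointwise convergence of the Berezin transforms then follows from pairing the weak limit $EF_k^{(j)}\rightharpoonup F_k$ with the strong limit $Ek_z^{\D_j}\to k_z^{\D}$. The trade-off is that you obtain only pointwise convergence directly and must recover local uniformity by the compactness/contradiction argument along a diagonal sequence $(j_n,z_n)$; that step is sound (the lemma and the base case both survive diagonalization), but note that it quietly uses the continuity of $B_{\D}T$ on $\D\setminus\mathcal{Z}$, established in the introduction of the paper, to replace $B_{\D}T(z_n)$ by $B_{\D}T(z_\infty)$. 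Your projection lemma is of independent interest and would apply verbatim to any operator built from multiplications and Bergman projections, whereas the paper's argument is more quantitative and delivers the uniform estimate in a single pass. The $L^p$ statement is handled identically in both proofs, via the uniform sup-norm bound and dominated convergence on the bounded domain.
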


\begin{remark}
We note that the $T^{\D_j}$ in the theorem above depends on the
symbols and hence representation of $T$. However, representation
of products of Toeplitz operators is not unique.  For instance,
\c{C}elik and Zeytuncu in \cite{CelikZeytuncu16} showed that
there exists a Reinhardt domain $\D$ in $\C^2$ such that there
exists non-trivial nilpotent Toeplitz operators on $A^2(\D)$.
Hence the zero operator has multiple representations.
However, since the Berezin transform of $T$ is independent of
its representation, the Berezin transforms of $T^{\D_j}$ converge
to the same limit for any representation of $T$.
\end{remark}

For a function $\phi\in L^{q}(\D)$, assuming the Toeplitz operator $T_{\phi}$
is bounded on $A^2(\D)$,  we define the Berezin transform $B_{\D}\phi$
of $\phi$ as  $B_{\D}\phi(z)=B_{\D}T_{\phi}(z)$ for $z\in \D$. Hence
\begin{align*}
B_{\D}\phi(z) = \langle T_{\phi} k^{\D}_z,k^{\D}_z\rangle
    =\langle P_{\D}\phi k^{\D}_z,k^{\D}_z\rangle
    =\langle \phi k^{\D}_z,k^{\D}_z\rangle
    =\int_{\D}\phi(w)|k^{\D}_z(w)|^2dV(w).
\end{align*}
As a consequence of Theorem \ref{ThmLinfty} and Dini's Theorem
we have the following corollary.
\begin{corollary}\label{CorLInfty}
Let $\{\D_j\}$ be a sequence of domains in $\C^n$ such that
$\D_j\subset \D_{j+1}$ for all $j$ and
$\D=\cup_{j=1}^{\infty}\D_j$ be a non-trivial Bergman domain.
Assume that $\phi\in L^q(\D)$ for some $0<q<\infty$ so that
$T_{\phi}$ is bounded on $A^2(\D)$. Then there exists a subsequence
$\{j_k\}$ and functions $\phi_k\in L^{\infty}(\D_{j_k})$
such that $B_{\D_{j_k}}\phi_k\to B_{\D}\phi$ uniformly
on compact subsets of $\D\backslash \mathcal{Z}$. If $\D$ is
bounded, then $EB_{\D_{j_k}}\phi_k\to B_{\D}\phi$ in $L^p(\D)$
as $k\to \infty$ for all $0<p<\infty$.
\end{corollary}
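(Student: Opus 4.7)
The plan is to approximate the $L^q$ symbol $\phi$ by $L^\infty$ symbols via truncation, apply Theorem \ref{ThmLinfty} to each truncation, and then combine via Dini's theorem and a diagonal extraction. Write $\phi=u^+-u^-+i(v^+-v^-)$ with $u=\text{Re}\,\phi$ and $v=\text{Im}\,\phi$, and for each $m\in\mathbb{N}$ set
\[
\psi^{(m)} := \min(u^+,m) - \min(u^-,m) + i\bigl(\min(v^+,m)-\min(v^-,m)\bigr)\in L^\infty(\D),
\]
so that each of the four nonnegative pieces increases monotonically in $m$ to $u^\pm$ or $v^\pm$. The integral representation $B_\D\phi(z)=\int_\D\phi|k^\D_z|^2\,dV$ displayed just before the corollary implicitly requires $|\phi||k^\D_z|^2\in L^1(\D)$ for each $z\in\D\setminus\mathcal{Z}$, so the monotone convergence theorem gives $B_\D\min(u^\pm,m)(z)\uparrow\int u^\pm|k^\D_z|^2\,dV$ pointwise, and similarly for $v^\pm$; hence $B_\D\psi^{(m)}\to B_\D\phi$ pointwise on $\D\setminus\mathcal{Z}$.

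Next I would promote this pointwise convergence to uniform convergence on compact subsets of $\D\setminus\mathcal{Z}$ by invoking Dini's theorem on each of the four monotone sequences. Each $B_\D\min(u^\pm,m)$ is continuous on $\D\setminus\mathcal{Z}$ since its symbol is in $L^\infty$, and the monotone pointwise limits $z\mapsto\int u^\pm|k^\D_z|^2\,dV$ (and their imaginary analogues) are continuous on $\D\setminus\mathcal{Z}$ by a direct argument combining the $L^2$-continuity of $z\mapsto k^\D_z$ recorded in (\ref{Eqn0}) with the integrability of the dominator $|\phi||k^\D_z|^2$ on a neighborhood of any fixed $z_0\in\D\setminus\mathcal{Z}$. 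Dini then upgrades each monotone convergence to uniform-on-compacts convergence, and summing the four pieces yields $B_\D\psi^{(m)}\to B_\D\phi$ locally uniformly on $\D\setminus\mathcal{Z}$.

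Finally, fix an exhaustion $K_1\subset K_2\subset\cdots$ of $\D\setminus\mathcal{Z}$ by compacts and run a diagonal extraction: for each $k$, pick $m_k$ with $\sup_{K_k}|B_\D\psi^{(m_k)}-B_\D\phi|<1/(2k)$, then apply Theorem \ref{ThmLinfty} to the bounded symbol $\psi^{(m_k)}$ to choose $j_k>j_{k-1}$ satisfying $\sup_{K_k}|B_{\D_{j_k}}(\psi^{(m_k)}|_{\D_{j_k}})-B_\D\psi^{(m_k)}|<1/(2k)$. Setting $\phi_k:=\psi^{(m_k)}|_{\D_{j_k}}\in L^\infty(\D_{j_k})$, the triangle inequality gives $B_{\D_{j_k}}\phi_k\to B_\D\phi$ uniformly on compact subsets of $\D\setminus\mathcal{Z}$. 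For the $L^p$ conclusion when $\D$ is bounded, the same scheme is run with the additional requirements $\|EB_{\D_{j_k}}(\psi^{(m_k)}|_{\D_{j_k}})-B_\D\psi^{(m_k)}\|_{L^p(\D)}<1/(2k)$ (from the $L^p$ statement in Theorem \ref{ThmLinfty}) and $\|B_\D\psi^{(m_k)}-B_\D\phi\|_{L^p(\D)}<1/(2k)$ (from pointwise a.e.\ convergence on $\D$ together with a suitable dominated-convergence or uniform-integrability argument on the finite-measure domain).

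The main technical obstacle is verifying continuity of the monotone limits $z\mapsto\int u^\pm|k^\D_z|^2\,dV$ on $\D\setminus\mathcal{Z}$: only $T_\phi$ is assumed bounded, not the individual $T_{u^\pm}$ or $T_{v^\pm}$, so there is no operator-theoretic shortcut, and continuity must be extracted directly from the $L^2$-continuity of the normalized kernel combined with local $L^1$ control on $|\phi||k^\D_z|^2$. Once this continuity is in place, Dini's theorem and the diagonal extraction complete the proof.
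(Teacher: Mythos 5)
Your proposal follows essentially the same route as the paper: truncate the symbol to get bounded pieces, use monotone convergence plus Dini's theorem to obtain locally uniform convergence of $B_{\D}\phi_k$ to $B_{\D}\phi$, then invoke Theorem \ref{ThmLinfty} to select $j_k$, with a monotone/dominated convergence argument handling the $L^p$ statement on a bounded domain. The continuity of the monotone limits $z\mapsto\int_{\D}\phi^{\pm}|k_z^{\D}|^2\,dV$ that you single out as the main obstacle is indeed the delicate point, and the paper's own proof passes over it silently when it applies the nonnegative case to $\phi^{+}$ and $\phi^{-}$ separately (for which $T_{\phi^{\pm}}$ need not be bounded); your bookkeeping with a single bundled truncation $\psi^{(m)}$ and an exhaustion by compact sets is, if anything, slightly more careful than the paper's fixed-$K$ argument.
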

We note that, as Proposition \ref{PropLp} below shows, $\phi_k$ in the
    corollary above might have to be different from $R_{\D_k}\phi$.
\begin{remark}
If the domain $\D$ is not bounded, then the Berezin transform
$B_{\D}T_{\phi}$ of the Toeplitz operator of a bounded symbol
$\phi$ does not have to be in $L^p(\D)$. For instance, let
$\phi(z)=\text{Re}(z)$ and  $\D=\{z\in\mathbb{C}: 0<\text{Re}(z)<1\}.$
We note that $K^{\D}(z,z)\neq 0$ for any $z\in \D$ as $(z+1)^{-1}$
    is square integrable on $\D$.
Since $\phi$ is bounded and harmonic, we conclude that
$B_{\D}T_{\phi}=\phi$ which is not in $L^p(\D)$ for any $0<p<\infty$.
\end{remark}

In the following proposition we compute the asymptotics of
the Berezin transform of $\log|z|$ on annuli that converge
to the punctured disc. Also it shows that the first conclusion in
Theorem \ref{ThmLinfty} is not true if we drop the assumption
that the symbol is bounded. The function
$\log|z|\in L^p(\mathbb{D}\setminus \{0\})$ for all
$0<p<\infty$ and, Lemma \ref{Lemma2} implies that,
\[B_{\mathbb{D}\setminus\{0\}}\log|z|
=B_{\mathbb{D}}\log|z|
=\frac{1}{2}(|z|^2-1).\]
%%%%%%%%%%%%%%%%%%%%%%%%%
\begin{proposition}\label{PropAsymp}
Let $A_r=\{z\in \C:r<|z|<1\}$ and $\phi(z)=\log|z|$. Then
\[B_{A_r}\phi(z)\to \frac{|z|^2}{4}-\frac{1}{4|z|^2}\]
uniformly on compact subsets of $\mathbb{D}\setminus\{0\}$
as $r\to 0^+$.
\end{proposition}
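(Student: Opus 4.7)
The approach is direct computation using the standard Laurent orthonormal basis of $A^2(A_r)$. One first records that $\{z^n : n \in \mathbb{Z}\}$ is orthogonal in $A^2(A_r)$ with
\[\|z^n\|^2_{A_r} = \frac{\pi(1-r^{2n+2})}{n+1} \text{ for } n \neq -1, \qquad \|z^{-1}\|^2_{A_r} = -2\pi\log r.\]
Since $\phi(w)=\log|w|$ is radial, expanding $|K^{A_r}(w,z)|^2$ in this basis kills the off-diagonal angular integrals $\int_0^{2\pi}e^{i(n-m)\theta}d\theta$, leaving
\[B_{A_r}\phi(z)=\frac{N(z,r)}{D(z,r)}, \quad N(z,r)=\sum_{n\in\mathbb{Z}}\frac{J_n(r)|z|^{2n}}{\|z^n\|^4_{A_r}}, \quad D(z,r)=\sum_{n\in\mathbb{Z}}\frac{|z|^{2n}}{\|z^n\|^2_{A_r}},\]
where $J_n(r)=2\pi\int_r^1 \rho^{2n+1}\log\rho\,d\rho$ is evaluated in closed form by integration by parts, with the case $n=-1$ giving $J_{-1}(r)=-\pi(\log r)^2$.

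Next I would analyze each term as $r\to 0^+$. For $n\geq 0$, the factors $r^{2n+2}$ and $r^{2n+2}\log r$ vanish, so the $n$-th denominator term tends to $(n+1)|z|^{2n}/\pi$ and the $n$-th numerator term tends to $-|z|^{2n}/(2\pi)$; summing gives $K^{\mathbb{D}}(z,z)=(\pi(1-|z|^2)^2)^{-1}$ and the numerator contribution $-(2\pi(1-|z|^2))^{-1}$. For $n=-1$, although $\|z^{-1}\|^2_{A_r}$ and $J_{-1}(r)$ each blow up logarithmically, the ratio $J_{-1}/\|z^{-1}\|^4_{A_r}=-(4\pi)^{-1}$ is identically constant in $r$, so the $n=-1$ piece of the numerator equals $-|z|^{-2}/(4\pi)$ for every $r$, while the $n=-1$ piece of the denominator $|z|^{-2}/(-2\pi\log r)$ tends to $0$. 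For $n=-m-1$ with $m\geq 1$, rewriting in terms of $r^{2m}$ shows both pieces are of order $(r/|z|)^{2m}$ (up to a factor $|\log r|$) and contribute $O(r^2|\log r|)$ to each of $N,D$ on $\{|z|\geq a\}$.

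To make the termwise passage to the limit uniform on a compact set $K\subset\mathbb{D}\setminus\{0\}$, I would fix $0<a\leq|z|\leq b<1$ on $K$, take $r\in(0,a/2]$, and apply a Tannery-type argument. The $n\geq 0$ tails are dominated uniformly by a multiple of $(n+1)b^{2n}$; for the $n\leq-2$ tails, the elementary bound $m r^{2m}|\log r|\leq (2e)^{-1}$ on $(0,1]$ together with $(r/a)^{2m}\leq 4^{-m}$ allows one to bound the full $m$-sum by $O(r^2|\log r|/a^4)\to 0$. Passing to the limit yields $D(z,r)\to(\pi(1-|z|^2)^2)^{-1}$ and $N(z,r)\to-(2\pi(1-|z|^2))^{-1}-(4\pi|z|^2)^{-1}$ uniformly on $K$, and the algebraic simplification
\[-\frac{1-|z|^2}{2}-\frac{(1-|z|^2)^2}{4|z|^2}=-\frac{(1-|z|^2)(1+|z|^2)}{4|z|^2}=\frac{|z|^2}{4}-\frac{1}{4|z|^2}\]
gives the advertised limit.

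The only real subtlety is the $n=-1$ mode: $z^{-1}$ is precisely the basis element belonging to $A^2(A_r)$ but not to $A^2(\mathbb{D})$, and a naive guess would be that its contribution vanishes as $r\to 0^+$. Instead, its $L^2$-norm diverges only logarithmically, matched exactly by the logarithmic divergence of $J_{-1}$, so the mode contributes the constant $-|z|^{-2}/(4\pi)$ to the numerator; this is precisely the source of the $-1/(4|z|^2)$ term that distinguishes the limit from the disc Berezin transform $\tfrac12(|z|^2-1)$ noted just before the proposition.
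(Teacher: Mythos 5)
Your proposal is correct; the norms $\|z^n\|^2_{A_r}$, the integrals $J_n(r)$, the exact identity $J_{-1}(r)/\|z^{-1}\|^4_{A_r}=-1/(4\pi)$ for every $r$, the tail bounds, and the closing algebra all check out. The skeleton is the same as the paper's: both proofs rest on the Laurent expansion of $K^{A_r}$ and on the fact that the $n=-1$ mode, whose norm diverges only logarithmically, is the source of the extra $-1/(4|z|^2)$ (the paper's piece $\psi^0_{r,z_0}$ is exactly your $n=-1$ mode, and its computation $\int_{A_r}\phi\,|\psi^0_{r,z_0}|^2\,dV=-1/(4\pi|z_0|^2)$ is your constancy of $J_{-1}/\|z^{-1}\|^4$). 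Where you genuinely diverge is in execution. The paper squares a four-piece decomposition $\psi^0+\psi^1+\psi^2+\psi^3$ of the kernel and spends most of the proof estimating the cross terms $\Psi_{r,z_0}$ to zero; you instead exploit the radiality of $\phi=\log|w|$ so that every off-diagonal angular integral vanishes identically, which removes the cross-term analysis entirely and reduces the problem to a ratio of two explicit single sums handled by a Tannery/dominated-convergence argument. You also sum the $n\ge 0$ modes directly to get $-1/(2\pi(1-|z|^2))$, whereas the paper identifies that block with $B_{\mathbb{D}}\log|z|$ and imports the value $\tfrac12(|z|^2-1)$ from its Lemma \ref{Lemma2}, itself proved by a separate Poisson-kernel/Green's-function computation; your route makes the proposition self-contained. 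Two small points to add in a full write-up: justify interchanging $\sum_n$ with $\int_{A_r}$ (immediate, since $\phi$ is bounded on each fixed $A_r$ and the kernel series converges in $L^2(A_r)$), and note that uniform convergence of the quotient $N/D$ on $K$ uses that the limit of $D$, namely $K^{\mathbb{D}}(z,z)=1/(\pi(1-|z|^2)^2)\ge 1/\pi$, is bounded away from zero there.
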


The following proposition shows that the last statement in
Theorem \ref{ThmLinfty} is not true in general for operators in
the Toeplitz algebra. One can argue as follows. Let 
$\phi(z)=\log|z|$ be a symbol on $\disk^*=\mathbb{D}\setminus \{0\}$. 
One can show that $T_{\phi}$ is compact on $A^2(\disk^*)$ 
(as $A^2(\disk^*)=A^2(\mathbb{D})$ and $\phi=0$ on the unit circle). 
However, compact operators are in the Toeplitz algebra 
(see \cite[Theorem 6]{Englis92}). Hence $T_{\phi}$ is in 
the Toeplitz algebra; yet, by Proposition \ref{PropLp} below,   
$\{B_{A_r}T^{A_r}_{\phi}\}$ does not converge to $B_{\disk^*}T_{\phi}$ in $L^p$. 

\begin{proposition}\label{PropLp}
Let $A_r=\{z\in \C:r<|z|<1\}$, $\disk^*=\disk\backslash \{0\}$, and $\phi(z)=\log|z|$.
Then  $T_{\phi}$ is a compact operator on $A^2(\disk^*)$ and
\begin{eqnarray*}
\lim_{r\to 0^+}\|EB_{A_r}T^{A_r}_{\phi}\|_{L^p(\disk^*)}= \infty,
\end{eqnarray*}
while $\|B_{\disk^*}T_{\phi}\|_{L^p(\disk^*)}<\infty$ for all $1\leq p\leq \infty$.
\end{proposition}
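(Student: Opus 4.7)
The plan attacks the three assertions in turn. First, for compactness of $T_\phi$ on $A^2(\disk^*)$: since holomorphic $L^2$ functions extend across the removable singularity at the origin, $A^2(\disk^*)=A^2(\disk)$, with orthogonal basis $\{z^n\}_{n\geq 0}$. The radial symbol $\phi=\log|z|$ makes $T_\phi$ diagonal, and a short integration by parts on $\int_0^1 r^{2n+1}\log r\,dr$ gives $T_\phi z^n=-\frac{1}{2(n+1)}z^n$; since these eigenvalues tend to $0$, $T_\phi$ is compact.

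Second, for $\|B_{\disk^*}T_\phi\|_{L^p(\disk^*)}<\infty$: the Toeplitz--Berezin identity $B_{\disk^*}T_\phi=B_{\disk^*}\phi$, together with the formula $B_{\disk^*}\phi=B_{\disk}\phi=\frac{1}{2}(|z|^2-1)$ recorded just before Proposition \ref{PropAsymp}, shows that the transform is bounded on $\disk^*$ and hence in $L^p(\disk^*)$ for every $1\leq p\leq\infty$.

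The main assertion is the blow-up of $\|EB_{A_r}T^{A_r}_\phi\|_{L^p(\disk^*)}$. Again $B_{A_r}T^{A_r}_\phi=B_{A_r}\phi$, and by Proposition \ref{PropAsymp} this converges to $\psi(z):=\frac{|z|^2}{4}-\frac{1}{4|z|^2}$ uniformly on compact subsets of $\disk^*$ as $r\to 0^+$. The limit $\psi$ blows up like $-\frac{1}{4|z|^2}$ near the origin, so $\int_{\disk^*}|\psi|^p\,dV=\infty$ for every $p\geq 1$. Setting $f_r:=EB_{A_r}\phi$, for each fixed $z\in\disk^*$ and all $r<|z|$ one has $f_r(z)=B_{A_r}\phi(z)\to\psi(z)$, so $f_r\to\psi$ pointwise on $\disk^*$. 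For $1\leq p<\infty$, Fatou's lemma yields
\[
\liminf_{r\to 0^+}\|f_r\|_{L^p(\disk^*)}^p \;\geq\; \int_{\disk^*}|\psi|^p\,dV \;=\; \infty,
\]
so the limit is $\infty$. For $p=\infty$, $\|f_r\|_{L^\infty(\disk^*)}\geq |f_r(z_0)|\to|\psi(z_0)|$ for any fixed $z_0\in\disk^*$, and $|\psi(z_0)|\to\infty$ as $|z_0|\to 0$.

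The substantive input is Proposition \ref{PropAsymp}, which we may use. Given that, the remaining ingredients---the eigenvalue computation for a radial Toeplitz operator, the identity $B_{\disk}\log|z|=\frac{1}{2}(|z|^2-1)$, and a short Fatou argument once the $|z|^{-2}$ blow-up of $\psi$ at the origin is noted---are routine, so I do not anticipate a real obstacle beyond invoking Proposition \ref{PropAsymp}.
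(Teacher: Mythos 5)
Your proof is correct and follows essentially the same route as the paper: the diagonal eigenvalue computation for compactness, the identity $B_{\disk^*}T_\phi=\tfrac{1}{2}(|z|^2-1)$ from Lemma \ref{Lemma2} for the finiteness claim, and Proposition \ref{PropAsymp} as the key input for the blow-up. The only cosmetic difference is that you pass from locally uniform convergence to divergence of the $L^p$ norm via Fatou's lemma, whereas the paper uses an exhaustion by compact subsets $K\Subset\disk^*$; both ultimately rest on $\int_{\disk^*}|z|^{-2p}\,dV=\infty$ exactly when $p\geq 1$.
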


%%%%%%%%%%%%%%%%%%%%%%%%%%%%%%%%%%%%%%%%
%%%%%%%%%%%%%%%%%%%%%%%%%%%%%%%%%%%%%%%%
\section{Proofs of Theorems \ref{ThmBounded}, \ref{ThmDecreasing},
    \ref{ThmLinfty} and Corollary \ref{CorLInfty}}
%%%%%%%%%%%%%%%%%%%%%%%%%%%%%%
We start with a simple lemma.
\begin{lemma}\label{Lem:Adjoint}
Let $\D$ be a non-trivial Bergman domain in $\C^n$ and $U\subset \D$
be a subdomain. Then $R_U^*K_z^U=K_z^{\D}$ for $z\in U$.
\end{lemma}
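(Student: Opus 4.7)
The plan is to prove the identity by unpacking the two sides against an arbitrary test function $f\in A^2(\D)$ and using the reproducing properties of the two Bergman kernels together with the definition of the adjoint. This reduces the claim to a standard uniqueness argument for reproducing kernels.

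Concretely, I would fix $z\in U$, let $f\in A^2(\D)$ be arbitrary, and compute the inner product $\langle f, R_U^* K_z^U\rangle_{A^2(\D)}$ two ways. On one hand, by definition of the adjoint this equals $\langle R_U f, K_z^U\rangle_{A^2(U)}$, and the reproducing property for $A^2(U)$ gives $\langle R_U f, K_z^U\rangle_{A^2(U)}=(R_U f)(z)=f(z)$, using $z\in U$ so the restriction is evaluated at an interior point. On the other hand, the reproducing property for $A^2(\D)$ gives $\langle f, K_z^{\D}\rangle_{A^2(\D)}=f(z)$. Hence $\langle f, R_U^* K_z^U - K_z^{\D}\rangle_{A^2(\D)}=0$ for every $f\in A^2(\D)$; since both $R_U^* K_z^U$ and $K_z^{\D}$ lie in $A^2(\D)$, the difference is the zero element, giving $R_U^* K_z^U=K_z^{\D}$.

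As a sanity check one could also verify the identity directly from the integral formula $R_U^* g(w)=\int_U K^{\D}(w,\zeta) g(\zeta)\,dV(\zeta)$ stated earlier in the paper, by substituting $g=K_z^U$ and then recognizing $\int_U K^{\D}(w,\zeta)\overline{K^U(z,\zeta)}\,dV(\zeta)$ as the reproducing integral applied to $K^{\D}(w,\cdot)|_U\in A^2(U)$ evaluated at $z$. I do not anticipate any real obstacle here: the statement is essentially a tautology about adjoints of restriction operators between reproducing kernel Hilbert spaces, and the only thing to keep track of is that $z\in U$ so that evaluation of $R_U f$ at $z$ is legitimate.
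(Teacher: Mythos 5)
Your proof is correct and follows essentially the same route as the paper: both compute $\langle f, R_U^*K_z^U\rangle_{\D}=\langle R_Uf,K_z^U\rangle_U=f(z)$ for arbitrary $f\in A^2(\D)$ and conclude by uniqueness of the reproducing element. Your version merely spells out the final uniqueness step (orthogonality of the difference to all of $A^2(\D)$) that the paper compresses into the phrase ``uniqueness of the Bergman kernel.''
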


\begin{proof} For $z\in U$ and $f\in A^2(\D)$ we have
\[ f(z)=\langle R_Uf,K_z^U\rangle_U =\langle f,R_U^*K_z^U\rangle_{\D}.\]
Because of the uniqueness of the Bergman kernel, we conclude
that  $R_U^*K_z^U=K_z^{\D}$.
\end{proof}

 We will  need the following results of Ramadanov and Skwarczy\'nski
 (see \cite[Theorem 12.1.23 and Theorem 12.1.24]{JarnickiPflugBook1Ed2}
 and also \cite{Ramadanov67,Ramadanov83,IwinskiSkwarczynski75, SkwarczynskiThesis}).
 \begin{theorem}[Ramadanov]\label{ThmRamadanov}
Let $\D_j$ be an increasing sequence of domains in $\C^n$ such that
$\D=\cup_{j=1}^{\infty} \D_j$. Then, $K^{\D_j}\to K^{\D}$ as $j\to\infty$ locally
uniformly on $\D \times \D$.
\end{theorem}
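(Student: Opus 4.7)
The plan is to combine the extremal characterization of the Bergman kernel with a normal-families argument. First I would observe that by the extremal property $K^{\D}(z,z) = \sup\{|f(z)|^2 : f \in A^2(\D),\, \|f\|_{L^2(\D)} \le 1\}$, the inclusion $\D_j \subset \D_{j+1} \subset \D$ together with the fact that restriction is norm-nonincreasing gives $K^{\D_j}(z,z) \ge K^{\D_{j+1}}(z,z) \ge K^{\D}(z,z)$ whenever $z \in \D_j$. Hence the diagonal values form a decreasing sequence in $j$, bounded below by $K^{\D}(z,z)$, and in particular $\{K^{\D_j}(z,z)\}_j$ is locally bounded above on $\D$.

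Next, fixing $z \in \D$, I would set $h_j(\xi) = K^{\D_j}(\xi,z)$ for $\xi \in \D_j$, extended by zero to $\D$, and note that $\|h_j\|_{L^2(\D)}^2 = K^{\D_j}(z,z)$ is bounded. For every compact $K \subset \D$, once $j$ is large enough that $K \subset \D_j$, the standard $L^2$-to-sup-norm estimate for holomorphic functions on slightly larger compacta yields a uniform bound on $K$, so $\{h_j\}$ is a normal family on $\D$. Some subsequence $h_{j_k}$ converges locally uniformly to a holomorphic $h$ on $\D$, and Fatou's lemma gives $h \in A^2(\D)$. To identify $h = K^{\D}_z$, I would test against an arbitrary $g \in A^2(\D)$: the reproducing property on $\D_{j_k}$ gives $g(z) = \int_{\D} g\, \overline{h_{j_k}}\, dV$, and combining the locally uniform convergence $h_{j_k} \to h$ with the uniform $L^2$-bound on $\{h_{j_k}\}$ and the integrability of $g$ on $\D$ (to control the tail integral via Cauchy--Schwarz) lets me pass to the limit to obtain $g(z) = \langle g, h \rangle_{L^2(\D)}$. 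Uniqueness of the reproducing kernel then forces $h = K^{\D}_z$, and since every subsequence has a further subsequence with the same limit, the full sequence $h_j \to K^{\D}_z$ converges locally uniformly on $\D$.

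This already yields $K^{\D_j}(z,z) \to K^{\D}(z,z)$ pointwise on $\D$, and Dini's theorem, applied to the decreasing sequence of continuous functions $z \mapsto K^{\D_j}(z,z)$ with continuous limit, upgrades this to locally uniform convergence on $\D$. For the full two-variable statement, the inequality $|K^{\D_j}(w,z)|^2 \le K^{\D_j}(w,w) K^{\D_j}(z,z)$ combined with the diagonal bound gives local uniform bounds on $\{K^{\D_j}\}$ over compact subsets of $\D \times \D$; sesquiholomorphy of each $K^{\D_j}$ together with Cauchy estimates produces equicontinuity, and the pointwise convergence already established plus Arzel\`a--Ascoli promotes this to locally uniform convergence on $\D \times \D$. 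I expect the main obstacle to be the identification $h = K^{\D}_z$, since one must pass to the limit in a reproducing formula whose domain of integration varies with $j$; the combination of locally uniform convergence on compacta with the uniform $L^2$-control provided by the decreasing diagonal estimate is precisely what makes this pass-to-the-limit work, and it is the key estimate underpinning the whole argument.
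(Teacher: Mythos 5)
The paper does not actually prove this statement; it is quoted as a known theorem with references to \cite{Ramadanov67,Ramadanov83} and \cite[Theorem 12.1.23]{JarnickiPflugBook1Ed2}, so there is no in-paper proof to compare against. Your argument is correct and complete, and it is essentially the classical proof found in those sources: monotonicity of the diagonal from the extremal characterization and the norm-nonincreasing restriction, a normal-families extraction with the limit identified through the reproducing property (the varying domain of integration handled by splitting off a compact set and controlling the tail with Cauchy--Schwarz and the uniform $L^2$-bound $\|K^{\D_j}_z\|^2=K^{\D_j}(z,z)$), Dini for locally uniform diagonal convergence, and the two-variable upgrade via $|K^{\D_j}(w,z)|^2\le K^{\D_j}(w,w)K^{\D_j}(z,z)$ together with equicontinuity and Arzel\`a--Ascoli. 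I see no gaps.
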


 \begin{theorem}[Skwarczy\'nski]\label{ThmIwinskiSkwarczynski}
Let $\{\D_j\}$ be a sequence of domains in $\C^n$ such that
$\D\subset \D_{j+1}\subset \D_j$. Then, $K^{\D_j}\to K^{\D}$ as $j\to\infty$ locally
uniformly on $\D \times \D$ if and only if $K^{\D_j}(w,w)\to K^{\D}(w,w)$
as $j\to\infty$ for all $w\in \D$.
 \end{theorem}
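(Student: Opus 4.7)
The forward direction $(\Rightarrow)$ is immediate: locally uniform convergence of $K^{\D_j}$ to $K^{\D}$ on $\D \times \D$ implies pointwise convergence on the diagonal. The substance is the reverse direction, and the main conceptual point is that $K^{\D_j}_z$ and $K^{\D}_z$ live a priori in different Bergman spaces, so one must choose a common space for the comparison. My plan is to work entirely inside $A^2(\D)$, the smallest space available, and exploit the fact that the restriction $A^2(\D_j) \to A^2(\D)$ is norm-decreasing.

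For $(\Leftarrow)$, since $\D \subset \D_j$, the restriction of $K^{\D_j}_z$ lies in $A^2(\D)$ with
\[\|K^{\D_j}_z\|_{A^2(\D)}^2 \leq \|K^{\D_j}_z\|_{A^2(\D_j)}^2 = K^{\D_j}(z,z),\]
and the reproducing property in $A^2(\D)$ applied to this restriction yields $\langle K^{\D_j}_z, K^{\D}_z\rangle_{A^2(\D)} = K^{\D_j}_z(z) = K^{\D_j}(z,z)$. Expanding $\|K^{\D_j}_z - K^{\D}_z\|_{A^2(\D)}^2$ then telescopes to the key estimate
\[\|K^{\D_j}_z - K^{\D}_z\|_{A^2(\D)}^2 \leq K^{\D}(z,z) - K^{\D_j}(z,z),\]
which converts the numerical hypothesis on the diagonal directly into convergence of the reproducing-kernel functions in $A^2(\D)$.

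To upgrade this to locally uniform convergence on $\D \times \D$, I would apply Cauchy-Schwarz and the reproducing property at $w \in \D$ to obtain
\[|K^{\D_j}(w,z) - K^{\D}(w,z)| \leq \|K^{\D_j}_z - K^{\D}_z\|_{A^2(\D)} \sqrt{K^{\D}(w,w)} \leq \sqrt{K^{\D}(z,z) - K^{\D_j}(z,z)}\sqrt{K^{\D}(w,w)}.\]
Since $\D \subset \D_{j+1} \subset \D_j$, the smaller domain has the larger diagonal kernel, so $\{K^{\D_j}(z,z)\}$ is monotone increasing in $j$, each term is continuous on $\D$, and the limit $K^{\D}(z,z)$ is continuous. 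Dini's theorem then upgrades pointwise convergence on the diagonal to uniform convergence on any compact $L \subset \D$, and combined with the boundedness of $K^{\D}(w,w)$ on $L$ the displayed estimate gives uniform convergence of $K^{\D_j}$ to $K^{\D}$ on $L \times L$. After the choice of working space is identified, the only real step is the telescoping calculation; the rest is a routine application of Cauchy-Schwarz and Dini.
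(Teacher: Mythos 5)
Your argument is correct and complete. Note, however, that the paper does not prove Theorem \ref{ThmIwinskiSkwarczynski}; it is quoted with a citation to Jarnicki--Pflug and to the original papers of Iwi\'nski and Skwarczy\'nski, so there is no internal proof to compare against. What you have written is essentially the classical argument for the decreasing case: the inequality
\[
\left\| K^{\D_j}_z-K^{\D}_z\right\|_{A^2(\D)}^2\leq K^{\D}(z,z)-K^{\D_j}(z,z),
\]
obtained from norm-monotonicity of restriction together with the identity $\langle K^{\D_j}_z,K^{\D}_z\rangle_{A^2(\D)}=K^{\D_j}(z,z)$, is the standard device, and the passage to local uniformity via Cauchy--Schwarz against $K^{\D}_w$ and Dini's theorem is the standard finish. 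Two points you use implicitly and could make explicit: (i) the restriction of $K^{\D_j}_z$ to $\D$ belongs to $A^2(\D)$ (immediate from $\D\subset\D_j$ and $K^{\D_j}_z\in A^2(\D_j)$), which is what licenses applying the reproducing property of $K^{\D}_z$ and $K^{\D}_w$ to it; and (ii) Dini's theorem requires continuity of each $z\mapsto K^{\D_j}(z,z)$ and of the limit $z\mapsto K^{\D}(z,z)$, which holds because Bergman kernels are real-analytic on $\D\times\D$ (even when identically zero). Neither is a gap, and the monotonicity $K^{\D_j}(z,z)\leq K^{\D_{j+1}}(z,z)\leq K^{\D}(z,z)$ that feeds Dini is correctly oriented.
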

Let $U$ be a subdomain of a domain $\D$. Since
\[K^{\D}(z,z)=\sup\{|f(z)|^2:f\in A^2(\D)\text{ and } \|f\|=1\},\]
we have $0\leq K^{\D}(z,z) \leq K^{U}(z,z)$ for every $z\in U$. Hence, if
$K^{\D}(z,z)\not = 0$, then $K^{U}(z,z)\not = 0$.

%%%%%%%%%%%%%%%%%%%%%%%%%%%
\begin{lemma}\label{LemRestriction}
Let $\{\D_j\}$ be a sequence of domains in $\C^n$ such that
$\D_j\subset \D_{j+1}$ for all $j$ and
$\D=\cup_{j=1}^{\infty}\D_j$ be a non-trivial Bergman domain.
Then for each compact set $K\subset\D\backslash \mathcal{Z}$, we have
\[\lim_{j\to\infty}\sup_{z\in K}\|R^*_{\D_j}k_z^{\D_j}-k^{\D}_z\|_{L^2(\D)}=0.\]
\end{lemma}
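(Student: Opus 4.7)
The plan is to exploit Lemma~\ref{Lem:Adjoint}, which tells us that $R^*_{\D_j}K^{\D_j}_z=K^{\D}_z$ for every $z\in\D_j$. Dividing by the relevant normalization, this means
\[R^*_{\D_j}k^{\D_j}_z=\frac{K^{\D}_z}{\sqrt{K^{\D_j}(z,z)}},\qquad k^{\D}_z=\frac{K^{\D}_z}{\sqrt{K^{\D}(z,z)}},\]
so the two functions differ only by a scalar multiple of the \emph{same} function $K^{\D}_z$. Hence the $L^2(\D)$ norm squared of the difference collapses to
\[\|R^*_{\D_j}k^{\D_j}_z-k^{\D}_z\|^2_{L^2(\D)}=K^{\D}(z,z)\left(\frac{1}{\sqrt{K^{\D_j}(z,z)}}-\frac{1}{\sqrt{K^{\D}(z,z)}}\right)^2,\]
using $\|K^{\D}_z\|^2=K^{\D}(z,z)$.

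First I would observe that since $K\subset\D=\bigcup_j\D_j$ is compact and the $\D_j$ form an open cover, there is a $j_0$ with $K\subset\D_j$ for all $j\geq j_0$; in particular $k^{\D_j}_z$ makes sense for $z\in K$ once $j$ is large (noting also that $K^{\D_j}(z,z)\geq K^{\D}(z,z)>0$ on $K$ because restriction decreases norms). Next I would apply Ramadanov's Theorem (Theorem~\ref{ThmRamadanov}) to get $K^{\D_j}(z,z)\to K^{\D}(z,z)$ uniformly on $K$. Because $z\mapsto K^{\D}(z,z)$ is continuous on $\D\setminus\mathcal{Z}$ and strictly positive there, it is bounded above and below by positive constants on $K$; consequently $K^{\D_j}(z,z)$ is bounded below uniformly on $K$ for large $j$, and the function $t\mapsto 1/\sqrt{t}$ is uniformly continuous on the relevant interval. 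This forces
\[\sup_{z\in K}\left|\frac{1}{\sqrt{K^{\D_j}(z,z)}}-\frac{1}{\sqrt{K^{\D}(z,z)}}\right|\to 0\]
as $j\to\infty$.

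Combining the two displays finishes the proof: the factor $K^{\D}(z,z)$ is uniformly bounded on $K$, and the squared difference of reciprocal square roots tends to zero uniformly on $K$, yielding $\sup_{z\in K}\|R^*_{\D_j}k^{\D_j}_z-k^{\D}_z\|_{L^2(\D)}\to 0$. There is no real obstacle here; the one point to be careful about is ensuring the uniform lower bound $\inf_{z\in K}K^{\D}(z,z)>0$, which is where the hypothesis $K\subset\D\setminus\mathcal{Z}$ (rather than merely $K\subset\D$) is used.
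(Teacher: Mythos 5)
Your proof is correct and follows essentially the same route as the paper: both reduce the problem via Lemma~\ref{Lem:Adjoint} to the observation that $R^*_{\D_j}k^{\D_j}_z$ and $k^{\D}_z$ are scalar multiples of the same function $K^{\D}_z$, so the norm of the difference becomes $\bigl|1-\sqrt{K^{\D}(z,z)}/\sqrt{K^{\D_j}(z,z)}\bigr|$, which tends to zero uniformly on $K$ by Ramadanov's Theorem together with the uniform positive lower bound for $K^{\D}(z,z)$ on the compact set $K\subset\D\setminus\mathcal{Z}$. The only cosmetic difference is that you factor out $K^{\D}_z$ and keep the reciprocal square roots, whereas the paper factors out $k^{\D}_z$, but these are the same computation.
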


\begin{proof}
First we note that $0\leq K^{\D}(z,z)\leq K^{\D_j}(z,z)$ for all $j$ and $z\in K$.
So since $K\subset \D\setminus \mathcal{Z}$ we have $K^{\D_j}(z,z)\neq 0$ for
all $j$ so that $K\subset \D_j$. Let $j_0$ be chosen such
that $K\subset \D_{j_0}$.  Lemma \ref{Lem:Adjoint} implies that
$R^*_{\D_j}k_z^{\D_j}=K_z^{\D}/\sqrt{K^{\D_j}(z,z)}$ for $j\geq j_0$.
Then for $z\in K$ and $j\geq j_0$ we have
\begin{align*}
\|R^*_{\D_j}k_z^{\D_j}- k_z^{\D}\|_{L^2(\D)}
=& \left\|\frac{K_z^{\D}}{\sqrt{K^{\D_j}(z,z)}}
-\frac{K_z^{\D}}{\sqrt{K^{\D}(z,z)}}\right\|_{L^2(\D)} \\
=&\left\|k_z^{\D}\left(1-\sqrt{K^{\D}(z,z)}/\sqrt{K^{\D_j}(z,z)}\right)\right\|_{L^2(\D)} \\
= &\left|1-\sqrt{K^{\D}(z,z)}/\sqrt{K^{\D_j}(z,z)}\right|.
\end{align*}
Ramadanov's Theorem (Theorem \ref{ThmRamadanov}) implies that
$K^{\D}(z,z)/K^{\D_j}(z,z)\to 1$ uniformly on $K$ as $j\to \infty$.
Therefore,
$\sup_{z\in K}\|R^*_{\D_j}k_z^{\D_j}-k^{\D}_z\|_{L^2(\D)}\to 0$ as $j\to \infty$.
\end{proof}

The following Lemma, which is used in the proof of Theorem \ref{ThmBounded},
might be of interest on its own right.

%%%%%%%%%%%%%%%%%%%
\begin{lemma} \label{LemSeries}
Let $\D$ be a non-trivial Bergman domain in $\C^n$ and $U\subset \D$
be a subdomain. Let $T$ be a bounded operator on $A^2(\D)$. Then
\[\frac{B_{\D}T(z)}{B_U(R_UTR^*_U)(z)} = \frac{K^U(z,z)}{K^{\D}(z,z)}\]
for $z\in U\backslash \mathcal{Z}$.
\end{lemma}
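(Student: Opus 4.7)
The plan is to directly compute the numerator and denominator using the definition of the Berezin transform, together with the adjoint relation $R_U^\ast K_z^U = K_z^\D$ from Lemma \ref{Lem:Adjoint}. The key observation is that $R_U^\ast k_z^U$ and $k_z^\D$ are scalar multiples of each other, so the inner product defining $B_U(R_U T R_U^\ast)(z)$ collapses onto $B_\D T(z)$ up to an explicit factor.

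First I would unfold the definition on the $U$-side, using $\langle R_U T R_U^\ast f, g\rangle_U = \langle T R_U^\ast f, R_U^\ast g\rangle_\D$ (valid since $R_U$ and $R_U^\ast$ are bounded between $A^2(U)$ and $A^2(\D)$), to write
\[
B_U(R_U T R_U^\ast)(z) = \langle T R_U^\ast k_z^U,\, R_U^\ast k_z^U\rangle_\D.
\]
Next I would substitute $k_z^U = K_z^U/\sqrt{K^U(z,z)}$ and apply Lemma \ref{Lem:Adjoint} to replace $R_U^\ast K_z^U$ by $K_z^\D$, obtaining
\[
R_U^\ast k_z^U \;=\; \frac{K_z^\D}{\sqrt{K^U(z,z)}} \;=\; \frac{\sqrt{K^\D(z,z)}}{\sqrt{K^U(z,z)}}\, k_z^\D,
\]
where the second equality uses $K^\D(z,z)\neq 0$ since $z\notin\mathcal{Z}$. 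Plugging this back yields
\[
B_U(R_U T R_U^\ast)(z) \;=\; \frac{K^\D(z,z)}{K^U(z,z)}\,\langle T k_z^\D, k_z^\D\rangle_\D \;=\; \frac{K^\D(z,z)}{K^U(z,z)}\, B_\D T(z),
\]
and rearranging gives the claimed identity.

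There is essentially no obstacle: the only subtle points are verifying that $k_z^U$ makes sense (which requires $K^U(z,z)\neq 0$, guaranteed by $z\in U\setminus\mathcal{Z}$ together with the monotonicity $K^\D(z,z)\leq K^U(z,z)$ noted right before Lemma \ref{LemRestriction}), and ensuring we do not divide by zero on the right-hand side, which again follows from $z\notin\mathcal{Z}$.
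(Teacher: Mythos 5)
Your proof is correct and follows essentially the same route as the paper: both unfold $B_U(R_UTR_U^*)(z)$ as $\langle TR_U^*k_z^U, R_U^*k_z^U\rangle_{\D}$ and then apply Lemma \ref{Lem:Adjoint} to identify $R_U^*k_z^U$ with $K_z^{\D}/\sqrt{K^U(z,z)}$, from which the ratio $K^{\D}(z,z)/K^U(z,z)$ falls out. Your remarks on why $k_z^U$ is well defined and why no division by zero occurs are a welcome (if minor) addition to what the paper states explicitly.
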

\begin{proof}
For $z\in U\backslash \mathcal{Z}$, we use Lemma \ref{Lem:Adjoint} to get
\begin{align*}
B_U(R_UTR_U^*)(z)
=&\langle TR_U^*k_z^U,R_U^*k_z^U\rangle_{\D} \\
=&\frac{\langle TK_z^{\D},K_z^{\D}\rangle_{\D}}{K^U(z,z)} \\
=&\frac{K^{\D}(z,z)}{K^U(z,z)}B_{\D}T(z).
\end{align*}
Hence the proof of Lemma \ref{LemSeries} is complete.
\end{proof}

\begin{corollary}
Let $\D$ be a non-trivial Bergman domain in $\C^n, U\subset \D$
be a subdomain, and $T$ be a bounded linear
operator on $A^2(\D)$. Assume that $p\in \overline{U}$ and $1\leq\alpha<\infty$
such that $\frac{K^U(z,z)}{K^{\D}(z,z)}\to \alpha$ as $z\to p$, $z\in U\backslash \mathcal{Z}$. Then
$B_{\D}T$ is continuous at $p$ if and only if $B_U(R_UTR^*_U)$ is
continuous at $p$.
\end{corollary}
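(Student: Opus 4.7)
The plan is to read the corollary off from Lemma \ref{LemSeries}. Rearranged slightly, that lemma gives the pointwise identity
\[ B_{\D}T(z) \;=\; \frac{K^U(z,z)}{K^{\D}(z,z)}\, B_U(R_UTR_U^*)(z) \qquad \text{on } U\setminus \mathcal{Z}. \]
Thus the two Berezin transforms $B_{\D}T$ and $B_U(R_UTR_U^*)$ differ on $U\setminus \mathcal{Z}$ only by the scalar factor $K^U(z,z)/K^{\D}(z,z)$, which by hypothesis tends to $\alpha$ as $z\to p$ through $U\setminus \mathcal{Z}$.

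Because $1\leq \alpha<\infty$, this multiplier has a finite, \emph{nonzero} limit at $p$, and hence its reciprocal $K^{\D}(z,z)/K^U(z,z)$ also has a finite limit $1/\alpha$. Multiplying the identity by one factor or the other, one sees that $\lim_{z\to p} B_{\D}T(z)$ exists (with $z\in U\setminus \mathcal{Z}$) if and only if $\lim_{z\to p} B_U(R_UTR_U^*)(z)$ exists, and when both do the limits are related by $\lim B_{\D}T(z)=\alpha\lim B_U(R_UTR_U^*)(z)$.

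It remains to match this with the notion of continuity at $p$. If $p\in U\setminus \mathcal{Z}$, then both Berezin transforms are already defined at $p$; by continuity of the diagonal Bergman kernels together with $K^{\D}(p,p)>0$ the constant $\alpha$ is forced to equal $K^U(p,p)/K^{\D}(p,p)$, and Lemma \ref{LemSeries} applied at $z=p$ ties the limiting values above to the values at $p$, so ordinary continuity of one Berezin transform at $p$ is equivalent to ordinary continuity of the other. If instead $p$ is a closure point of $U\setminus \mathcal{Z}$ outside it, then "continuity at $p$" should be read as admitting a continuous extension, and this is precisely the existence of the limits discussed above, so the equivalence follows again. The only real obstruction to this argument would be $\alpha=0$ or $\alpha=\infty$, both of which are excluded by hypothesis; beyond that, the work is entirely routine.
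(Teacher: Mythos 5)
Your argument is correct and is exactly the one the paper intends: the corollary is stated immediately after Lemma \ref{LemSeries} with no separate proof, precisely because it follows from the identity $B_{\D}T(z)=\frac{K^U(z,z)}{K^{\D}(z,z)}B_U(R_UTR_U^*)(z)$ together with the fact that the multiplier has a finite nonzero limit $\alpha$ at $p$. Your additional care in distinguishing the cases $p\in U\setminus\mathcal{Z}$ versus $p$ a closure point (continuity versus continuous extension) is a reasonable reading of the statement and does not change the substance.
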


Now we are ready to prove Theorem \ref{ThmBounded}.

\begin{proof}[Proof of Theorem \ref{ThmBounded}]
The proof of locally uniform convergence is a result of Theorem
\ref{ThmRamadanov} together with Lemma \ref{LemSeries}. Indeed,
Theorem \ref{ThmRamadanov} implies that
\begin{eqnarray*}
K^{\D_j}(z,z)/K^{\D}(z,z)\to 1
\end{eqnarray*}
locally uniformly on $\D\times \D$ as $j\to \infty$. Then Lemma \ref{LemSeries}
implies that
\begin{eqnarray*}
B_{\D_j}R_{\D_j}TR^*_{\D_j}\to B_{\D}T
\end{eqnarray*}
locally uniformly on
$\D$ as $j\to\infty$.

To prove the second part we assume that $\D$ is bounded and  $0<p<\infty$.
From the first part of the proof, we know that
$B_{\D_j}R_{\D_j}TR^*_{\D_j}\to B_{\D}T$ uniformly on compact sets as
$j\to \infty$. Furthermore,
$|B_{\D}T(z)|\leq \|T\|$ and $|EB_{\D_j}R_{\D_j}TR^*_{\D_j}(z)|\leq \|T\|$
for all $z\in \D$ and all $j$. Then, using the Lebesgue Dominated
Convergence Theorem, we conclude that
$EB_{\D_j}R_{\D_j}TR^*_{\D_j}\to B_{\D}T$ in $L^p(\D)$  as $j\to\infty$.
\end{proof}

\begin{lemma}\label{LemL2Conv}
Let $\D$ be a non-trivial Bergman domain and $\{\D_j\}$ be a sequence of
domains in $\C^n$ such that $\D\subset\D_{j+1}\subset \D_j$ for all $j$.
Assume that $K_{\D_j}(z,z)\to K_{\D}(z,z)$ as $j\to\infty$ for every $z\in\D$.
Then for each compact set $K\subset\D\backslash\mathcal{Z}$, we have
\[\lim_{j\to\infty}\sup_{z\in K}\|R^{\D_j}_{\D}k^{\D_{j}}_z-k^{\D}_z\|_{L^2(\D)}=0.\]
\end{lemma}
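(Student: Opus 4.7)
The plan is to mirror the proof of Lemma \ref{LemRestriction}, but with the roles reversed: here $\D\subset\D_j$, so instead of Ramadanov's Theorem I will invoke Skwarczy\'nski's Theorem (Theorem \ref{ThmIwinskiSkwarczynski}), and instead of getting an explicit scalar multiple for the adjoint I will exploit the fact that $R^{\D_j}_{\D}$ has operator norm at most $1$.

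First I would apply Lemma \ref{Lem:Adjoint} with ambient domain $\D_j$ and subdomain $\D$: for $z\in\D$ this yields $(R^{\D_j}_{\D})^{*}K^{\D}_z=K^{\D_j}_z$. Combined with the reproducing property in $A^2(\D_j)$, one computes
\[
\left\langle R^{\D_j}_{\D}k^{\D_j}_z,k^{\D}_z\right\rangle_{\D}
=\left\langle k^{\D_j}_z,(R^{\D_j}_{\D})^{*}k^{\D}_z\right\rangle_{\D_j}
=\frac{k^{\D_j}_z(z)}{\sqrt{K^{\D}(z,z)}}
=\sqrt{\frac{K^{\D_j}(z,z)}{K^{\D}(z,z)}},
\]
which is a non-negative real number. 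Second, since $\D\subset\D_j$, I get the trivial bound
\[
\|R^{\D_j}_{\D}k^{\D_j}_z\|_{L^2(\D)}^{2}
=\int_{\D}|k^{\D_j}_z(w)|^{2}\,dV(w)
\leq \|k^{\D_j}_z\|_{L^2(\D_j)}^{2}=1.
\]

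Expanding the squared $L^2(\D)$-norm of the difference and using these two observations gives
\[
\|R^{\D_j}_{\D}k^{\D_j}_z-k^{\D}_z\|_{L^2(\D)}^{2}
=\|R^{\D_j}_{\D}k^{\D_j}_z\|_{L^2(\D)}^{2}-2\sqrt{\frac{K^{\D_j}(z,z)}{K^{\D}(z,z)}}+1
\leq 2-2\sqrt{\frac{K^{\D_j}(z,z)}{K^{\D}(z,z)}}.
\]
Since $K^{\D_j}(z,z)\leq K^{\D}(z,z)$ (restriction decreases the kernel on the diagonal when the domain enlarges), the right-hand side is non-negative, and my task reduces to showing that $K^{\D_j}(z,z)/K^{\D}(z,z)\to 1$ uniformly on any compact $K\subset\D\setminus\mathcal{Z}$.

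For this final step I invoke Skwarczy\'nski's Theorem: the pointwise hypothesis $K^{\D_j}(z,z)\to K^{\D}(z,z)$ on $\D$ upgrades to locally uniform convergence of $K^{\D_j}$ to $K^{\D}$ on $\D\times\D$. Because $K^{\D}(z,z)$ is continuous and strictly positive on $\D\setminus\mathcal{Z}$, it is bounded away from $0$ on any compact $K\subset\D\setminus\mathcal{Z}$, so the quotient converges to $1$ uniformly on $K$. This finishes the argument. There is no real obstacle: the only mildly delicate point is ensuring uniform (rather than pointwise) control of $K^{\D_j}(z,z)/K^{\D}(z,z)$ on compacts of $\D\setminus\mathcal{Z}$, which is precisely what Skwarczy\'nski's Theorem provides.
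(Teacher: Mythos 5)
Your argument is correct, and it takes a genuinely different route from the paper. The paper proves the lemma by an $\ep$-splitting argument: it chooses a compact set $S\subset\D$ capturing all but $\ep$ of the mass of $k^{\D}_z$ uniformly for $z$ in $K$ (via a finite cover and the continuity of $z\mapsto k^{\D}_z$), uses Skwarczy\'nski's Theorem to get uniform convergence of the kernels on $S$, deduces that $k^{\D_j}_z$ also has small mass off $S$, and adds up the three pieces. You instead compute the cross term exactly: the adjoint identity $(R^{\D_j}_{\D})^{*}K^{\D}_z=K^{\D_j}_z$ from Lemma \ref{Lem:Adjoint} plus the reproducing property give
\[
\bigl\langle R^{\D_j}_{\D}k^{\D_j}_z,k^{\D}_z\bigr\rangle_{\D}=\sqrt{K^{\D_j}(z,z)/K^{\D}(z,z)},
\]
and together with the trivial bound $\|R^{\D_j}_{\D}k^{\D_j}_z\|_{L^2(\D)}\leq 1$ this yields the clean quantitative estimate
\[
\|R^{\D_j}_{\D}k^{\D_j}_z-k^{\D}_z\|_{L^2(\D)}^{2}\leq 2-2\sqrt{K^{\D_j}(z,z)/K^{\D}(z,z)},
\]
reducing everything to uniform convergence of the diagonal values on $K$. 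This is shorter, avoids the exhaustion-by-compacts machinery, and only uses the diagonal part of Skwarczy\'nski's Theorem (in fact Dini's Theorem would suffice here, since $K^{\D_j}(z,z)$ increases pointwise to the continuous limit $K^{\D}(z,z)$). The only point you leave implicit, which the paper spells out, is that $k^{\D_j}_z$ is well defined for all $z\in K$ only once $j$ is large enough that $K^{\D_j}(z,z)>0$ on $K$; this follows from the uniform convergence you establish (or from monotonicity plus compactness), so it is a matter of presentation rather than a gap.
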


\begin{proof}
If $K^{\D}(z,z)>0$ for some $z\in\D$, then $K^{\D_j}(z,z)>0$ for large $j$
because $K^{\D_j}(z,z)$ increases to $K^{\D}(z,z)$ as $j\to\infty$. Furthermore,
there exists an open neighborhood of $z$ for which the normalized 
Bergman kernels $k^{\D_{j}}$ and $k^{\D}$ are well-defined for $j$ 
large enough. Since $K\subset\D\backslash \mathcal{Z}$ is compact, 
all of the functions in the statement are well-defined for large $j$, 
and the limit makes sense.

Let $\ep>0$ be given.  For each $z\in K$, we choose a compact
$S_{z}\subset \D$ so that
$\left\| k^{\D}_z\right\|_{L^2(\D \backslash S_z)}<\ep$.
Recall that the map $z\mapsto k^{\D}_{z}$ is continuous from
$\D\backslash \mathcal{Z}$ to $L^2(\D)$ (see \eqref{Eqn0}). For any
$z\in \D\setminus \mathcal{Z}$ we choose an open set
$U_z\subset \D\backslash \mathcal{Z}$ so that $z\in U_z$
and $\left\| k^{\D}_z-k^{\D}_w\right\|_{L^2(\D)}<\ep$
when $w\in U_{z}$. Then
\[\left\| k^{\D}_w\right\|_{L^{2}(\D \backslash S_z)}
<\ep+\left\| k^{\D}_z\right\|_{L^2(\D\backslash S_z)}
<2\ep\]
for $w\in U_{z}$. Since $K$ is compact, there exist
$z_{1},\cdots ,z_{m}\in K$ so that $K\subset \cup _{j=1}^{m}U_{z_j}$.
The set $S=\cup _{j=1}^{m}S_{z_j}\subset \D$ is compact as well and
\begin{eqnarray*}
\sup_{w\in K}\left\| k^{\D}_w \right\|_{L^2(\D \backslash S)}<2\ep.
\end{eqnarray*}
Using Theorem \ref{ThmIwinskiSkwarczynski}, we have
\begin{align}\label{Eqn1}
\sup_{z\in K, w\in S}\left |k^{\D_j}_z(w)-k^{\D}_z(w)\right |
<\frac{\ep}{\sqrt{Vol(S)+1}}
\end{align}
and
\begin{align*}
\sup_{z\in K, w\in S}\left ||k^{\D_j}_z(w)|^2-|k^{\D}_z(w)|^2\right|
<\frac{\ep^2}{Vol(S)+1}
\end{align*}
for large enough $j$. Then by integrating the above inequality
over $S$ and using
$\left\| k^{\D}_z\right\|_{L^2(\D \backslash S)}<2\ep$
we get
\begin{align*}
\|k_z^{\D_j}\|^2_{L^2(S)}\geq \|k_z^{\D}\|^2_{L^2(S)}-\ep^2
> 1-4\ep^2-\ep^2 =1-5\ep^2,
\end{align*}
which implies that $\|k_z^{\D_j}\|_{L^2(\D\backslash S)}<\sqrt{5}\ep$
when $j$ is large enough. Then using \eqref{Eqn1} we get
\begin{align*}
\|R^{\D_j}_{\D}k^{\D_{j}}_z-k^{\D}_z\|_{L^2(\D)}
&\leq \left\| k^{\D_{j}}_z-k^{\D}_z  \right\|_{L^2(S)}
+\|k^{\D}_z\|_{L^2(\D\backslash S)}+\|k^{\D_j}_z\|_{L^2(\D\backslash S)} \\
&<(3+\sqrt{5})\ep
\end{align*}
for $j$ large and $z\in K$. Hence,
\[\lim_{j\to\infty}\sup_{z\in K}\|R^{\D_j}_{\D}k^{\D_{j}}_z-k^{\D}_z\|_{L^2(\D)}=0.\]
The proof is finished.
\end{proof}

\begin{proof}[Proof of Theorem \ref{ThmDecreasing}]
For $z\in\D\backslash \mathcal{Z}$, we define $f(z)=B_{\D}T (z)$ and
\begin{align*}
f_j(z)=&B_{\D_j}(R^{\D_j}_{\D})^*TR^{\D_j}_{\D}(z)\\
g_j(z)=&\langle TR^{\D_j}_{\D}k^{\D_{j}}_z,k^{\D}_z \rangle_{L^2(\D)}
\end{align*}
for each $j$. Then
\[f_j(z)
=\langle (R^{\D_j}_{\D})^*TR^{\D_j}_{\D}k^{\D_{j}}_z,k^{\D_j}_z \rangle _{L^2(\D_j)}
= \langle TR^{\D_j}_{\D}k^{\D_{j}}_z,R^{\D_j}_{\D}k^{\D_j}_z \rangle_{L^2(\D)}.\]
Let $K\subset\D$ be a compact set. By Cauchy-Schwarz inequality we have
\begin{align*}
\sup_{z\in K}|g_j(z)-f(z)|
=&\sup_{z\in K}\left| \langle TR_{\D}^{\D_j}k_z^{\D_j}-Tk_z^{\D},
k_z^{\D}\rangle \right| \\
\leq& \sup_{z\in K}\left\| TR_{\D}^{\D_j}k_z^{\D_j}-Tk_z^{\D}\right\|_{L^2(\D)}\\
\leq &\left\| T\right\| \sup_{z\in K}
\left\| R_{\D}^{\D_j}k_z^{\D_j}-k_z^{\D}\right\|_{L^2(\D)}.
\end{align*}
The last term above converges to zero by Lemma \ref{LemL2Conv}.
Therefore, the sequence $\{g_j\}$ converges to $f$ uniformly on $K$.

Using Cauchy-Schwarz inequality again we have
\begin{align*}
|f_j(z)-g_j(z)| = \left|\langle  TR^{\D_j}_{\D}k^{\D_j}_z,
    R^{\D_j}_{\D}k^{\D_j}_z-k^{\D}_z \rangle \right|
    \leq  \|T\|\|R^{\D_j}_{\D}k^{\D_j}_z-k^{\D}_z\|_{L^2(\D)}.
\end{align*}
Lemma \ref{LemL2Conv} implies that the last term above converges
to zero uniformly  on $K$. Hence, $|f_j-g_j|\to 0$ uniformly on
$K$ as $j\to \infty$. Therefore, $\{f_j\}$ converges to $f$ uniformly on $K$. 

As in the proof of Theorem \ref{ThmBounded} we prove the second part 
as follows. We assume that $\D$ is bounded. From the previous part of 
this proof we know that $\{f_j\}$ converges to $f$ uniformly on compact 
subset of $\D$. Furthermore, $\|f_j\|_{L^{\infty}(\D)}\leq \|T\|$ for all $j$. 
Then using the  Lebesgue Dominated Convergence Theorem, we conclude 
that $\{f_j\}$ converges to $f$ in $L^p(\D)$  as $j\to\infty$ for all $0<p<\infty$.
\end{proof}

Now we are ready to prove Theorem \ref{ThmLinfty}.

\begin{proof}[Proof of Theorem \ref{ThmLinfty}]
It is enough to prove the result for finite product of Toeplitz
operators as it is easy to conclude the theorem for the finite sums
of such operators. So let $T=T_{\phi_m}\cdots T_{\phi_1}$ where
$\phi_1, \ldots, \phi_m\in L^{\infty}(\D)$. One can easily show
that $B_{\D}T \in  L^{\infty}(\D)$ and
$B_{\D_j}T^{\D_j}\in  L^{\infty}(\D_j)$ for all $j$. Furthermore,
one can show that
\[\max\{\|B_{\D_j}T^{\D_j}\|_{L^{\infty}(\D_j)},
 \|B_{\D}T\|_{L^{\infty}(\D)}\}
\leq \|\phi_1\|_{L^{\infty}(\D)}\cdots \|\phi_m\|_{L^{\infty}(\D)}.\]
Let $f_j(z)=|B_{\D}T(z)-EB_{\D_j}T^{\D_j}(z)|$ for $z\in \D$. Then
\begin{align}\label{IneqBounded}
\|f_j\|_{L^{\infty}(\D)}
    \leq 2\|\phi_1\|_{L^{\infty}(\D)}\cdots \|\phi_m\|_{L^{\infty}(\D)}
\end{align}
for all  $j$.

We will use induction to prove that
\[\sup\{|T^{\D_j}k_z^{\D_j}(w)-Tk_z^{\D}(w)|:z,w\in K\}\to 0 \]
as $j\to \infty$.
So first let us assume that
$T=T_{\phi_1}$ is a Toeplitz operator. Let $K$ be a compact
set in $\D\backslash\mathcal{Z}$. As in the proof of Lemma \ref{LemL2Conv}
for a given $\ep>0$, there exists  a compact set $S\subset \D$ and
$j_0\in\mathbb{N}$ such that $K\Subset \D_j$,
$\|k^{\D}_z\|_{L^2(\D\setminus S)}<\ep$ for all $z\in K$,
and $\|k^{\D_j}_z\|_{L^2(\D_j\setminus S)}<\ep$ for all $z\in K$
and $j\geq j_0$. Let us consider the following equalities.

\begin{align*}
T_{\phi_1}k^{\D}_z(w)-T^{\D_j}_{\phi_1}k^{\D_j}_z(w)
=& \langle\phi_1 k^{\D}_z,K^{\D}(.,w)\rangle_{\D}
-\langle \phi_1 k^{\D_j}_z,K^{\D_j}(.,w)\rangle_{\D_j}\\
=&\langle\phi_1 k^{\D}_z,K^{\D}(.,w)\rangle_{S}
-\langle \phi_1 k^{\D_j}_z,K^{\D_j}(.,w)\rangle_{S}\\
&+\langle\phi_1 k^{\D}_z,K^{\D}(.,w)\rangle_{\D\setminus S}
-\langle \phi_1 k^{\D_j}_z,K^{\D_j}(.,w)\rangle_{\D_j\setminus S}.
\end{align*}
There exists $C_K>1$ such that $1/C_K\leq K^{\D_j}(w,w)\leq C_K$ for all $w\in K$
and all $j\geq j_0$ since by Theorem \ref{ThmRamadanov}, the continuous
functions $\{K^{\D_j}(w,w)\}$ converges to $K^{\D}(w,w)$ uniformly on $K$.

Without loss of generality we can assume that
\begin{align*}
\|k^{\D}_z\|_{L^2(\D\setminus S)}
&<\frac{\ep}{\sqrt{K^{\D}(w,w)}}, \\
\|k^{\D_j}_z\|_{L^2(\D_j\setminus S)}
&<\frac{\ep}{\sqrt{K^{\D_j}(w,w)}}
\end{align*}
for $j\geq j_0$ and all $z,w\in K$. Then
\[\left|\langle\phi_1 k^{\D}_z,K^{\D}(.,w)\rangle_{\D\setminus S}\right|
+ \left|\langle \phi_1 k^{\D_j}_z,K^{\D_j}(.,w)\rangle_{\D_j\setminus S}\right|
\leq 2\ep \|\phi_1\|_{L^{\infty}(\D)}\]
for all $z,w\in K$.  Also
\[\sup\left\{\left|\langle \phi_1 k^{\D}_z,K^{\D}(.,w)\rangle_{S}
- \langle\phi_1 k^{\D_j}_z,K^{\D_j}(.,w)\rangle_S\right|:z,w\in K\right\}\to 0\]
as $j\to \infty$ (a consequence of Theorem \ref{ThmRamadanov}). Then
\[\limsup_{j\to\infty} \sup\left\{\left|T_{\phi_1}k^{\D}_z(w)
-T^{\D_j}_{\phi_1}k^{\D_j}_z(w)\right|:z,w\in K\right\}
\leq 2\ep \|\phi_1\|_{L^{\infty}(\D)}.\]
Since $\ep$ is arbitrary, we conclude that
\[\sup\left\{\left|T^{\D}_{\phi_1}k^{\D}_z(w)
-T^{\D_j}_{\phi_1}k^{\D_j}_z(w)\right|:z,w\in K\right\}\to 0\]
as $j\to \infty$. We note that for $z\in \D_j$ we have
\begin{align}
\nonumber \left|B_{\D_j}T_{\phi_1}^{\D_j}(z)\right.-&\left.B_{\D}T_{\phi_1}(z)\right|
= \frac{\left| \sqrt{\frac{K^{\D}(z,z)}{K^{\D_j}(z,z)}}
\left\langle T^{\D_j}_{\phi_1}k_{z}^{\D_j},K_z^{\D_j}\right\rangle_{\D_j}
-\left\langle T_{\phi_1}k_z^{\D},K_z^{\D}\right\rangle_{\D }\right|}{\sqrt{K^{\D}(z,z)}} \\
\label{Eqn4}=&\frac{1}{\sqrt{K^{\D}(z,z)}}\left|\sqrt{\frac{K^{\D}(z,z)}{K^{\D_j}(z,z)}}
T^{\D_j}_{\phi_1}k_z^{\D_j}(z)-T_{\phi_1}k_{z}^{\D}(z)\right|.
\end{align}
Hence $B_{\D_j}T^{\D_j}_{\phi_1}\to B_{\D}T_{\phi_1}$ uniformly on compact
subsets of $\D\backslash \mathcal{Z}$ as $j\to \infty$.

Next we show the induction step. Let $\widetilde{T}=T_{\phi_{m-1}}\cdots T_{\phi_1}$
and $\widetilde{T}^{\D_j}=T_{\phi_{m-1}|_{\D_j}}\cdots T_{\phi_1|_{\D_j}}$.
As the induction hypothesis we assume that
$ \widetilde{T}^{\D_j}k^{\D_j}_z \to \widetilde{T}k^{\D}_z $ uniformly on
compact subsets  as $j\to \infty$. Then
\begin{align*}
Tk^{\D}_z(w)-T^{\D_j}k^{\D_j}_z(w)
=& \langle\phi_m\widetilde{T}k_z^{\D}, K^{\D}(.,w)\rangle_{\D}
- \langle\phi_m\widetilde{T}^{\D_j}k_z^{\D_j}, K^{\D_j}(.,w)\rangle_{\D_j}\\
=&\langle\phi_m\widetilde{T}k_z^{\D}, K^{\D}(.,w)\rangle_{S}
- \langle\phi_m\widetilde{T}^{\D_j}k_z^{\D_j}, K^{\D_j}(.,w)\rangle_{S}\\
&+\langle\phi_m\widetilde{T}k_z^{\D}, K^{\D}(.,w)\rangle_{\D\setminus S} \\
&- \langle\phi_m\widetilde{T}^{\D_j}k_z^{\D_j}, K^{\D_j}(.,w)\rangle_{\D_j\setminus S}
\end{align*}
As in the previous case, we have
\begin{align*}
\left|\langle\phi_m\widetilde{T}k_z^{\D}, K^{\D}(.,w)\rangle_{\D\setminus S}\right|
\leq & \|\phi_m\|_{L^{\infty}(\D)}\|\widetilde{T}\|\|k_z^{\D}\|_{L^2(\D\setminus S)}\sqrt{K^{\D}(w,w)}\\
\leq &\ep \|\phi_m\|_{L^{\infty}(\D)}\cdots  \|\phi_1\|_{L^{\infty}(\D)}
\end{align*}
and
\begin{align*}
\left|\langle\phi_m\widetilde{T}^{\D_j}k_z^{\D_j},
K^{\D_j}(.,w)\rangle_{\D_j\setminus S}\right|
\leq &\|\phi_m\|_{L^{\infty}(\D)}\|\widetilde{T}^{\D_j}\|\|k_z^{\D_j}\|_{L^2(\D_j\setminus S)}\sqrt{K^{\D_j}(w,w)}\\
\leq &\ep \|\phi_m\|_{L^{\infty}(\D)}\cdots  \|\phi_1\|_{L^{\infty}(\D)}.
\end{align*}
Then
\begin{align*}
\left|\langle\phi_m\widetilde{T}k_z^{\D},
K^{\D}(.,w)\rangle_{\D\setminus S}\right|
&+\left|\langle\phi_m\widetilde{T}^{\D_j}k_z^{\D_j},
K^{\D_j}(.,w)\rangle_{\D_j\setminus S}\right|\\
&\leq 2\ep \|\phi_m\|_{L^{\infty}(\D)}\cdots  \|\phi_1\|_{L^{\infty}(\D)}
\end{align*}
for all $z,w\in K$. Furthermore, by  induction hypothesis, we have
\[\sup\{|\widetilde{T}^{\D_j}k_z^{\D_j}(w)- \widetilde{T}k_z^{\D}(w)|:z,w\in K\}
\to 0\]
 as $j\to \infty$. Then
\[\sup\left\{\langle\phi_m\widetilde{T}k_z^{\D}, K^{\D}(.,w)\rangle_{S}
- \langle\phi_m\widetilde{T}^{\D_j}k_z^{\D_j},
    K^{\D_j}(.,w)\rangle_{S}:z,w\in K\right\}\to 0\]
as $j\to \infty$. Hence,
\[\sup\{|T^{\D_j}k_z^{\D_j}(w)- Tk_z^{\D}(w)|:z,w\in K\} \to 0\]
as $j\to \infty$. Similar to \eqref{Eqn4} one can show that
\begin{align*}\label{Eqn5}
\left|B_{\D_j}T^{\D_j}(z)-B_{\D}T(z)\right|
=\frac{1}{\sqrt{K^{\D}(z,z)}}\left|\sqrt{\frac{K^{\D}(z,z)}{K^{\D_j}(z,z)}}
T^{\D_j}k_z^{\D_j}(z)-Tk_{z}^{\D}(z)\right|.
\end{align*}
Therefore, $f_j\to 0$ uniformly on $K$ as $j\to\infty$.

To prove the second part we assume that $\D$ is bounded. Then 
the Lebesgue Dominated Convergence Theorem together 
with \eqref{IneqBounded} implies that $\int_{\D}|f_j(z)|^pdV(z)\to 0$ 
as $j\to \infty$. Hence, $EB_{\D_j}T^{\D_j} \to B_{\D}T$ in $L^p(\D)$ 
as $j\to \infty$.
\end{proof}

Using very similar arguments as in the proof of Theorem \ref{ThmLinfty}
one can prove the following corollary.
\begin{corollary}\label{CorLinftyDecreasing}
Let $\D$ be a non-trivial Bergman domain and $\{\D_j\}$ be a sequence of
domains in $\C^n$ such that $\D\subset\D_{j+1}\subset \D_j$ for all $j$.
Assume $K^{\D_j}(z,z)\to K^{\D}(z,z)$ as $j\to\infty$ for every $z\in\D$.
Let  $T=\sum_{m=1}^lT_{\phi_{m,1}}\cdots T_{\phi_{m,k_m}}$
be a finite sum of finite products of Toeplitz operators with
bounded symbols on $\D_1$ and
$T^{\D_j} =\sum_{m=1}^lT_{\phi_{m,1}|_{\D_j}}\cdots T_{\phi_{m,k_m}|_{\D_j}}$
for each $j$. Then $B_{\D_j}T^{\D_j}\to B_{\D}T$ uniformly on compact
subsets of $\D\setminus \mathcal{Z}$  as $j\to \infty$.
Furthermore, if $\D_1$ is bounded, then $EB_{\D_j}T^{\D_j}\to B_{\D}T$
in $L^p(\D)$ as $j\to \infty$ for all $0<p<\infty$.
\end{corollary}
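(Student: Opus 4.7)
The plan is to mirror the proof of Theorem \ref{ThmLinfty} almost verbatim, substituting Skwarczy\'nski's theorem (Theorem \ref{ThmIwinskiSkwarczynski}) for Ramadanov's theorem, and invoking Lemma \ref{LemL2Conv} to control tails of the normalized Bergman kernels in the decreasing-domain setup. By linearity it suffices to treat a single product $T=T_{\phi_m}\cdots T_{\phi_1}$, with $T^{\D_j}$ the analogous operator using the restricted symbols $\phi_i|_{\D_j}$. Since all $\D_j$ and $\D$ lie inside $\D_1$ and the symbols are bounded on $\D_1$, both operators are bounded on their respective Bergman spaces with operator norms at most $\prod_i\|\phi_i\|_{L^\infty(\D_1)}$.

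Fix a compact set $K\subset \D\setminus\mathcal{Z}$ and $\ep>0$. Exactly as in the proof of Lemma \ref{LemL2Conv}, Theorem \ref{ThmIwinskiSkwarczynski} supplies a compact set $S\subset \D\subset \D_j$ and an index $j_0$ such that
\[\sup_{z\in K}\|k^{\D}_z\|_{L^2(\D\setminus S)}<\ep \quad\text{and}\quad \sup_{z\in K}\|k^{\D_j}_z\|_{L^2(\D_j\setminus S)}<\sqrt{5}\,\ep\]
for $j\geq j_0$, while the values $K^{\D_j}(w,w)$ remain uniformly bounded above and below on $K$. I would then prove by induction on $m$ that
\[\sup\bigl\{|T^{\D_j}k^{\D_j}_z(w)-Tk^{\D}_z(w)|:z,w\in K\bigr\}\to 0.\]
For the base case $m=1$, decompose
\[T_{\phi_1}k^{\D}_z(w)-T^{\D_j}_{\phi_1}k^{\D_j}_z(w)=\langle \phi_1 k^{\D}_z,K^{\D}(\cdot,w)\rangle_{\D}-\langle\phi_1 k^{\D_j}_z,K^{\D_j}(\cdot,w)\rangle_{\D_j}\]
into integrals over $S$ and over the complement. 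Cauchy--Schwarz combined with the tail estimates above bounds the contribution from $(\D\setminus S)\cup(\D_j\setminus S)$ by a constant times $\ep\|\phi_1\|_\infty$, while the integrals over $S$ converge uniformly on $K\times K$ by local uniform convergence of $K^{\D_j}$ to $K^{\D}$ on compact subsets of $\D\times \D$. The induction step runs identically with $\widetilde{T}^{\D_j}=T_{\phi_{m-1}|_{\D_j}}\cdots T_{\phi_1|_{\D_j}}$ and $\widetilde{T}$ in place of the single Toeplitz operator, using the uniform bound $\|\widetilde{T}^{\D_j}\|,\|\widetilde{T}\|\leq \prod_{i<m}\|\phi_i\|_\infty$ to control the tail and the induction hypothesis to handle the integral over $S$.

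Once the kernel convergence is established, the identity analogous to \eqref{Eqn4},
\[|B_{\D_j}T^{\D_j}(z)-B_{\D}T(z)|=\frac{1}{\sqrt{K^{\D}(z,z)}}\left|\sqrt{\frac{K^{\D}(z,z)}{K^{\D_j}(z,z)}}\,T^{\D_j}k^{\D_j}_z(z)-Tk^{\D}_z(z)\right|,\]
together with the uniform convergence $K^{\D_j}(z,z)/K^{\D}(z,z)\to 1$ on $K$, yields uniform convergence of the Berezin transforms on $K$. For the $L^p$ part assume $\D_1$ is bounded, so $\D$ is bounded; the uniform bound $|B_{\D_j}T^{\D_j}(z)|\leq \prod_i\|\phi_i\|_{L^\infty(\D_1)}$, the uniform convergence on compact subsets of $\D\setminus\mathcal{Z}$, and the fact that $\mathcal{Z}$ has zero Lebesgue measure, together allow the Lebesgue dominated convergence theorem to conclude $EB_{\D_j}T^{\D_j}\to B_{\D}T$ in $L^p(\D)$. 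The main obstacle is the uniform tail control of $\|k^{\D_j}_z\|_{L^2(\D_j\setminus S)}$, which is nontrivial precisely because $\D_j\setminus S$ is larger than $\D\setminus S$; this is already handled by Lemma \ref{LemL2Conv}, so no new difficulty arises beyond those overcome in Theorem \ref{ThmLinfty}.
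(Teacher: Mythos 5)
Your proposal is correct and is essentially the paper's intended argument: the paper offers no separate proof of this corollary, stating only that it follows by ``very similar arguments as in the proof of Theorem~\ref{ThmLinfty},'' and you have carried out exactly those arguments, correctly identifying the two substitutions the decreasing setting requires (Theorem~\ref{ThmIwinskiSkwarczynski} in place of Theorem~\ref{ThmRamadanov}, and the tail bound $\sup_{z\in K}\|k_z^{\D_j}\|_{L^2(\D_j\setminus S)}<\sqrt{5}\,\ep$ extracted from the proof of Lemma~\ref{LemL2Conv} to handle the fact that $\D_j\setminus S$ is now larger than $\D\setminus S$). The induction, the identity analogous to \eqref{Eqn4}, and the dominated-convergence step for the $L^p$ claim all go through as you describe.
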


We finish this section with the proof of Corollary \ref{CorLInfty}.

\begin{proof}[Proof of Corollary \ref{CorLInfty}]
Let $\phi\in L^q(\D)$ and let $K\subset\D\backslash\mathcal{Z}$ be compact.
First assume that $\phi$ is real valued and $\phi\geq 0$ on $\D$.
For each $k\geq 1$ we define $\phi_k=\min\{\phi, k\}$.
Hence, $\phi_k\in L^{\infty}(\D)$ and $B_{\D}\phi_k(z)$ increases to
$B_{\D}\phi(z)$ for each $z\in\D$. By Dini's Theorem, $B_{\D}\phi_k$
converges uniformly to $B_{\D}\phi$ on $K$. By Theorem \ref{ThmLinfty},
for each $k\geq 1$ there exists $j_k$ so that
\[\sup_{z\in K}|EB_{\D_{j_k}}\phi_k(z)-B_{\D}\phi_k(z)|\leq \frac{1}{k}.\]
This means that $EB_{\D_{j_k}}\phi_k$ converges uniformly to
$B_{\D}\phi$ on $K$. If $\D$ is bounded and $p>0$, then by the
last statement of Theorem \ref{ThmLinfty}, we can find $j_k$ so that
$\|EB_{\D_{j_k}}\phi_k-B_{\D}\phi_k\|_{L^p(\D)}\leq 1/k$.
By Monotone Convergence Theorem, we conclude that
$\|B_{\D}\phi_k-B_{\D}\phi\|_{L^p(\D)}\to 0$ as $k\to\infty$.
Therefore, $\|EB_{\D_{j_k}}\phi_k-B_{\D}\phi\|_{L^p(\D)}\to 0$ as
$k\to\infty$. Now let $\phi\in L^q(\D)$ be real valued. Then we
write $\phi=\phi^+-\phi^-$ where $\phi^+, \phi^-\geq 0$ on $\D$.
Since $B_{\D}\phi=B_{\D}\phi^+-B_{\D}\phi^-$, we can apply the 
first part of the proof to each term. Finally, if $\phi$ is complex 
valued then we can apply the previous part of the proof to the 
real and imaginary parts of $\phi$.
\end{proof}

%%%%%%%%%%%%%%%%%%%%%%%%%%%%%%%%%%%%%%%%
%%%%%%%%%%%%%%%%%%%%%%%%%%%%%%%%%%%%%%%%
\section{Proofs of Propositions \ref{PropAsymp} and \ref{PropLp}}

Let $\disk=\{z\in\C:|z|<1\}$ be the unit disk in the complex plane.
The Poisson kernel (see, for instance, \cite[Definition 1.2.3]{RansfordBook}) 
on the unit disk is defined as 
 \[P(z,\zeta)=\text{Re}\left(\frac{\zeta+z}{\zeta-z}\right) 
=\frac{1-|z|^2}{|\zeta-z|^2}\]
where $z\in\disk$, $|\zeta|=1$. 
\begin{lemma}\label{Lemma1}
Let  $0<s<1$ and $z\in\disk$. Then
\begin{align*}
\frac{1}{2\pi}\int_0^{2\pi} (P(sz,e^{it}))^2dt=\frac{1+s^2|z|^2}{1-s^2|z|^2}.
\end{align*}
\end{lemma}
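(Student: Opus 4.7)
The plan is to rewrite the squared Poisson kernel as a Fourier series in $t$ and then sum using Parseval's identity. Recall that for $w\in\disk$ with $w=re^{i\theta}$ one has the classical expansion
\[
P(w,e^{it})=\frac{1-|w|^2}{|e^{it}-w|^2}=\sum_{n\in\mathbb{Z}}r^{|n|}e^{in(\theta-t)},
\]
which is just the real part of the geometric series for $(e^{it}+w)/(e^{it}-w)$, valid whenever $|w|<1$. Taking $w=sz$, so that $|w|=s|z|<1$, the Fourier coefficients of the $2\pi$-periodic function $t\mapsto P(sz,e^{it})$ are $e^{in\theta}(s|z|)^{|n|}$, where $\theta=\arg(sz)$.

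The key step is to apply Parseval's identity to the real-valued function $P(sz,e^{it})$. This yields
\[
\frac{1}{2\pi}\int_0^{2\pi}(P(sz,e^{it}))^2\,dt=\sum_{n\in\mathbb{Z}}(s|z|)^{2|n|}=1+2\sum_{n=1}^{\infty}(s|z|)^{2n}.
\]
Summing the geometric series on the right gives $1+\dfrac{2s^2|z|^2}{1-s^2|z|^2}=\dfrac{1+s^2|z|^2}{1-s^2|z|^2}$, which is the claimed identity. There is no real obstacle here; the only thing to check carefully is that the Fourier series converges in $L^2$ (immediate since $s|z|<1$ makes the coefficients square-summable) so that Parseval's identity is legitimate.

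If one prefers a purely complex-analytic argument, the same identity can be obtained by writing $|e^{it}-sz|^2=(e^{it}-sz)(e^{-it}-s\bar z)$, parametrizing $\zeta=e^{it}$, and evaluating the contour integral
\[
\frac{1}{2\pi i}\oint_{|\zeta|=1}\frac{(1-s^2|z|^2)^2\,\zeta\,d\zeta}{(\zeta-sz)^2(1-s\bar z\zeta)^2}
\]
by residues: the integrand has a double pole only at $\zeta=sz$ inside $|\zeta|=1$ (the other double pole sits at $\zeta=1/(s\bar z)$ outside the disc), and a short derivative computation recovers the same answer $(1+s^2|z|^2)/(1-s^2|z|^2)$.
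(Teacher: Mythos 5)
Your proof is correct, but it takes a genuinely different route from the paper's. You expand $P(sz,e^{it})=\sum_{n\in\mathbb{Z}}(s|z|)^{|n|}e^{in(\theta-t)}$ and apply Parseval's identity, turning the integral into the geometric series $\sum_{n\in\mathbb{Z}}(s|z|)^{2|n|}=1+2\sum_{n\geq 1}(s|z|)^{2n}$; the residue computation you sketch as an alternative is also sound (the only singularity inside the unit circle is the double pole at $\zeta=sz$, and the derivative evaluation gives the same answer). The paper instead writes $z=\rho e^{i\theta}$ and uses two structural facts: the symmetry $P(s\rho e^{i\theta},e^{it})=P(s\rho e^{it},e^{i\theta})$ and the reproducing property of the Poisson integral for functions harmonic on the disk, which collapses the integral to the single evaluation $P(s^{2}\rho z,e^{i\theta})=\frac{1-s^{4}|z|^{4}}{(1-s^{2}|z|^{2})^{2}}$. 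Your Parseval argument is more computational and entirely self-contained --- all you need is the Fourier expansion of the kernel and square-summability of the coefficients, which is immediate from $s|z|<1$ --- whereas the paper's argument is shorter once the Dirichlet-problem machinery is granted and adapts more directly to integrals of products $P(w_{1},e^{it})P(w_{2},e^{it})$ at two distinct points. One cosmetic remark: when $z=0$ the phase $\theta=\arg(sz)$ in your expansion is undefined, but there $P(0,e^{it})\equiv 1$ and both sides equal $1$, so nothing is lost.
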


\begin{proof}
Let us fix $z=\rho e^{i\theta}$. In \eqref{Eqn2},
we use the property that
\[P(s\rho e^{i\theta},e^{it})=P(s\rho e^{it},e^{i\theta});\]
and in \eqref{Eqn3} we use the facts that $P$, the Poisson
kernel, is the kernel of the integral operator that solves the
Dirichlet problem and $P(.,e^{it})$ is harmonic on $\mathbb{D}$
(see \cite{RansfordBook}).
\begin{align}
\nonumber \frac{1}{2\pi}\int_0^{2\pi} \left(P(sz,e^{it})\right)^2dt=&
\frac{1}{2\pi}\int_0^{2\pi} P(sz,e^{it})P(sz,e^{it})dt\\
\nonumber =&\frac{1}{2\pi}\int_0^{2\pi} P(s\rho e^{i\theta},e^{it})P(sz,e^{it})dt\\
\label{Eqn2}=& \frac{1}{2\pi}\int_0^{2\pi} P(s\rho e^{it},e^{i\theta})P(sz,e^{it})dt\\
\label{Eqn3} =& P(s^2\rho z,e^{i\theta})\\
\nonumber =&\frac{1-s^4|z|^4}{(1-s^2|z|^2)^2}\\
\nonumber =&\frac{1+s^2|z|^2}{1-s^2|z|^2}.
\end{align}
Hence, the proof of Lemma \ref{Lemma1} is complete.
\end{proof}

A function $u(z,w)$ in $\disk^2$ is said to be separately subharmonic if
when one of the variables is fixed in $\disk$, $u$ is subharmonic in the
other variable.

\begin{lemma}\label{Lemma2}
Let $\displaystyle G_a(z)=\log\left | \frac{a-z}{1-\ab z}\right | $ be the Green's
function for $\mathbb{D}$ with pole at $a\in \mathbb{D}$. Then
\begin{align*}
B_{\mathbb{D}}G_a(z)=\frac{1}{2}\left (\left |\frac{a-z}{1-\ab z}
\right|^2-1\right)
\end{align*}
and  the function $u(z,a)=B_{\mathbb{D}}G_a(z)$, defined for
$(z,a)\in\disk ^2$, is separately subharmonic on $\disk ^2$.
\end{lemma}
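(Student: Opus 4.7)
The plan is to compute $B_{\mathbb{D}}G_a(z)$ explicitly via a change of variables adapted to the disk's automorphism group, and then to deduce separate subharmonicity by recognizing the answer as the squared modulus of a function that is holomorphic in each variable separately.

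First, I would apply the substitution $w=\varphi_z(\zeta):=(z-\zeta)/(1-\bar z\zeta)$. Using $|k_z^{\mathbb{D}}(w)|^2 = (1-|z|^2)^2/(\pi|1-\bar z w|^4)$ together with $|\varphi_z'(\zeta)|^2 = (1-|z|^2)^2/|1-\bar z\zeta|^4$ and $1-\bar z\varphi_z(\zeta) = (1-|z|^2)/(1-\bar z\zeta)$, one obtains the clean identity $|k_z^{\mathbb{D}}(w)|^2\,dV(w) = \pi^{-1}\,dV(\zeta)$, so
\[
B_{\mathbb{D}}G_a(z) = \frac{1}{\pi}\int_{\mathbb{D}}\log\bigl|\varphi_a(\varphi_z(\zeta))\bigr|\,dV(\zeta).
\]
The composition $\varphi_a\circ\varphi_z$ is a disk automorphism whose unique zero lies at $c:=\varphi_z(a)$, so it equals $e^{i\theta}\varphi_c$; in particular $|\varphi_a(\varphi_z(\zeta))| = |\varphi_c(\zeta)|$.

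It remains to evaluate $J(s):=\int_{\mathbb{D}}\log|\varphi_s(\zeta)|\,dV(\zeta)$ for $s = |c|\in[0,1)$. Splitting $\log|\varphi_s(\zeta)| = \log|\zeta-s| - \log|1-s\zeta|$, the second integral vanishes because $\log|1-s\zeta|$ is harmonic in $\zeta\in\mathbb{D}$ with value $0$ at the origin. For the first, I would pass to polar coordinates and use the standard circular mean $(2\pi)^{-1}\int_0^{2\pi}\log|\rho e^{i\theta}-s|\,d\theta = \log\max(\rho,|s|)$, followed by an elementary one-variable integration, to produce $J(s) = \pi(|s|^2-1)/2$. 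Combining this with the pseudohyperbolic symmetry $|\varphi_z(a)| = |\varphi_a(z)| = |(a-z)/(1-\bar a z)|$ (which follows from $|a-z|=|z-a|$ and $\overline{1-\bar a z}=1-\bar z a$) yields the claimed closed form.

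For separate subharmonicity, I would observe that, up to the additive constant $-1/2$, the function $u(z,a)$ equals $\tfrac{1}{2}|\varphi_a(z)|^2$. With $a$ fixed, $z\mapsto\varphi_a(z) = (a-z)/(1-\bar a z)$ is holomorphic on $\mathbb{D}$, and $|f|^2$ is subharmonic for any holomorphic $f$ since $\Delta|f|^2 = 4|f'|^2\geq 0$. For subharmonicity in $a$, I would invoke the pseudohyperbolic symmetry to rewrite $|\varphi_a(z)|^2 = |\varphi_z(a)|^2$, where $a\mapsto\varphi_z(a) = (z-a)/(1-\bar z a)$ is holomorphic on $\mathbb{D}$ for fixed $z$, so the same $|f|^2$-argument applies. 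The main technical point to keep track of is the identification of the composition $\varphi_a\circ\varphi_z$ and the subsequent matching of the resulting scalar quantity to the claimed form via pseudohyperbolic symmetry; the remaining steps are routine computations.
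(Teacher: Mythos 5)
Your argument is correct, and although it opens with the same move as the paper---exploiting M\"obius invariance of the Berezin transform through the substitution $w=\varphi_z(\zeta)$---its computational core is genuinely different. The paper reduces $B_{\mathbb{D}}G_a(z)$ to $B_{\mathbb{D}}G_0(\psi_a(z))$, i.e.\ to the Berezin transform of the radial symbol $\log|w|$ at a general point $z$, and evaluates that by a rather heavy direct computation: the Poisson-kernel identity $\frac{1}{2\pi}\int_0^{2\pi}(P(sz,e^{it}))^2\,dt=\frac{1+s^2|z|^2}{1-s^2|z|^2}$ (its Lemma \ref{Lemma1}) followed by an explicit antiderivative of $x(1+|z|^2x^2)\log x/(1-|z|^2x^2)^3$. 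You instead absorb the kernel entirely into the change of variables, reducing the problem to $J(s)=\int_{\mathbb{D}}\log|\varphi_s(\zeta)|\,dV(\zeta)$ --- in effect the Berezin transform of a general Green's function evaluated at the \emph{origin} --- and you dispatch that with elementary potential theory: $\log|1-s\zeta|$ is harmonic on a neighborhood of $\overline{\mathbb{D}}$ (its singularity at $\zeta=1/s$ lies outside), so its area integral vanishes by the mean value property, while the circular means $\log\max(\rho,s)$ of $\log|\rho e^{i\theta}-s|$ leave a one-line radial integral giving $J(s)=\pi(s^2-1)/2$. This bypasses Lemma \ref{Lemma1} and the special antiderivative entirely, and with $|k_z^{\mathbb{D}}(w)|^2\,dV(w)=\pi^{-1}\,dV(\zeta)$ the normalizations match to give the claimed $\frac12(|\varphi_a(z)|^2-1)$. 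You also actually prove the separate subharmonicity assertion, via $\Delta|f|^2=4|f'|^2\ge 0$ for holomorphic $f$ together with the symmetry $|\varphi_a(z)|=|\varphi_z(a)|$; the paper's proof states this in the lemma but never addresses it, so this is a worthwhile addition rather than a redundancy.
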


\begin{proof}
First suppose that $a=0$. Using Lemma \ref{Lemma1} in the fourth
equality below  we get
\begin{align*}
B_{\mathbb{D}}G_0(z)=&\frac{(1-|z|^2)^2}{\pi}
\int_{\disk}\frac{\log|w|}{|1-\wb z|^4}dV(w)\\
=&\frac{(1-|z|^2)^2}{\pi}\int_0^1 s\log s \int_0^{2\pi}
\frac{1}{|1-se^{-it} z|^4}dtds\\
=&2 (1-|z|^2)^2\int_0^1 \frac{s\log s}{(1-s^2|z|^2)^2}
%\left(
\frac{1}{2\pi}\int_0^{2\pi}
\frac{(1-s^2|z|^2)}{|e^{it}-sz|^2}
\frac{(1-s^2|z|^2)}{|e^{it}-sz|^2}dt%\right)
ds\\
=&2 (1-|z|^2)^2\int_0^1 \frac{s\log s}{(1-s^2|z|^2)^2}
\frac{1+s^2|z|^2}{(1-s^2|z|^2)}ds\\
=&2 (1-|z|^2)^2\int_0^1
\frac{s(1+s^2|z|^2)\log s}{(1-s^2|z|^2)^3}ds.
\end{align*}
One can show that
\[\int \frac{x(1+|z|^2x^2)\log x}{(1-|z|^2x^2)^3}dx\
=\frac{x^2\log x}{2(|z|^2x^2-1)^2}+\frac{1}{4|z|^2(|z|^2x^2-1)}+C.\]
Therefore,
\[2 (1-|z|^2)^2\int_0^1
\frac{s(1+s^2|z|^2)\log s}{(1-s^2|z|^2)^3}ds
=\frac{1}{2}(|z|^2-1).\]

Let $a\in\disk\backslash\{0\}$. Let $\displaystyle \psi_a(w)=\frac{a-w}{1-\ab w}$
be the M\"{o}bius transform on the disk. Then, using
 \cite[Chapter 2]{HedenmalmKorenblumZhuBook}
 (see also  \cite[Section 6.3]{ZhuBook}) we have
\begin{align*}
B_{\mathbb{D}}G_a(z)=&\int_{\disk}G_a(\psi_z(w))dV(w)\\
=&\int_{\disk}G_0(\psi_a\circ\psi_z(w))dV(w) \\
=&\int_{\disk}G_0(\psi_{\psi_{a}(z)}(w))dV(w)\\
=& B_{\mathbb{D}}G_0(\psi_{a}(z)) = \frac{1}{2}\left (\left
|\frac{a-z}{1-\ab z}\right |^2-1\right ).
\end{align*}
Hence, the proof of Lemma \ref{Lemma2} is complete.
\end{proof}

%%%%%%%%%%%%%%%%%%%%%%%%%%%
\begin{proof}[Proof of Proposition \ref{PropAsymp}]
The Bergman kernel of the annulus  $A_r$ is
(see \cite[Example 12.1.7 (c)]{JarnickiPflugBook1Ed2})
\[K^{A_r}(z,w)=-\frac{1}{2\pi z\wb \log r}
+\frac{1}{\pi z\wb } \sum_{k\neq 0} \frac{kz^k\wb^k}{1-r^{2k}}.\]
Let $K$ be a compact subset of $\mathbb{D}\setminus\{0\}$. Then
for small enough $r>0$ the set $K$ is a compact subset of $A_r$.
Let us fix $z_0\in K\Subset  A_r$ and let us
break down the function $K^{A_r}(z_0,w)$ into four pieces as
\[K^{A_r}(z_0,w)=\psi^0_{r,z_0}(w)+\psi^1_{r,z_0}(w)
+\psi^2_{r,z_0}(w) +\psi^3_{r,z_0}(w)\]
where
\begin{align*}
 \psi^0_{r,z_0}(w)=&-\frac{1}{2\pi z_0\wb \log r},\\
 \psi^1_{r,z_0}(w)=&\frac{r^2}{(1-r^2)\pi z_0^2 \wb^2},\\
 \psi^2_{r,z_0}(w)=& \frac{1}{\pi z_0^2}
\sum_{k=2}^{\infty}\frac{k}{1-r^{2k}}
\left(\frac{r}{z_0}\right)^{k-1}\left(\frac{r}{\wb}\right)^{k+1},\\
\psi^3_{r,z_0}(w)=&\frac{1}{\pi z_0\wb } \sum_{k=1}^{\infty}
\frac{kz_0^k\wb^k}{1-r^{2k}}.
\end{align*}
One can check that the $\sup\{|\psi^1_{r,z_0}(w)|:z_0\in K,w\in A_r\}$
and $\sup\{|\psi^3_{r,z_0}(w)|:z_0\in K,w\in A_r\}$ stay bounded  as
$r\to 0^+$. Furthermore, $\sup\{|\psi^2_{r,z_0}(w)|:z_0\in K,w\in A_r\}$
converges to zero as $r\to 0^+$.

Now we will estimate the Berezin  transform  of $\phi(w)=\log|w|$
on $A_r$ at $z_0$. First we can write $|K^{A_r}(z_0,w)|^2$ as
\begin{align*}
|K^{A_r}(z_0,w)|^2 =\left|\psi^0_{r,z_0}(w)\right|^2
+\left|\psi^3_{r,z_0}(w)\right|^2 +\Psi_{r,z_0}(w)
\end{align*}
where
\begin{align*}
\Psi_{r,z_0}(w)=&2\text{Re}\left(\psi^0_{r,z_0}(w)\sum_{j=1}^3
\overline{\psi^j_{r,z_0 }(w)}
+\psi^1_{r,z_0}(w)\sum_{j=2}^3\overline{\psi^j_{r,z_0 } (w)}\right)\\
&+2\text{Re}\left(\psi^2_{r,z_0}(w)\overline{\psi^3_{r,z_0}(w)}\right)
+\left|\psi^1_{r,z_0}(w)\right|^2+\left|\psi^2_{r,z_0}(w)\right|^2.
\end{align*}

Now we will show that
$\sup\left\{\left|\int_{A_r}\phi(w)\Psi_{r,z_0}(w)dV(w)\right|
:z_0\in K\right\}\to 0$ as $r\to 0^+$.
Using polar coordinates we compute
\begin{align*}
\int_{A_r}|\phi(w)|\left|\psi^0_{r,z_0}(w)\right| dV(w)
=&\frac{1}{|z_0|\log r}\int_r^1 \log \rho d\rho \\
=&\frac{r-r\log r-1}{|z_0|\log r}\to 0
\end{align*}
uniformly on $K$ as $r\to 0^+$. Hence using the fact that
$\psi^1_{r,z_0},\psi^2_{r,z_0},\psi^3_{r,z_0}$ stay bounded uniformly
on $A_r$ for all $z_0\in K$  we conclude that
\[\int_{A_r}\phi(w)\psi^0_{r,z_0}(w)
\sum_{j=1}^3\overline{\psi^j_{r,z_0 } (w)} dV(w) \to 0\]
uniformly on $K$ as $r\to 0^+$.  Similarly,  we conclude that
\[\int_{A_r}\phi(w) \left|\psi^1_{r,z_0}(w)\right|^2 dV(w)\to 0\] 
 and 
 \[ \int_{A_r}\phi(w)\psi^1_{r,z_0}(w)\sum_{j=2}^3
\overline{\psi^j_{r,z_0 } (w)} dV(w) \to 0\]
uniformly on $K$  as $r\to 0^+$ because
$\psi^1_{r,z_0},\psi^2_{r,z_0},\psi^3_{r,z_0}$ stay
bounded uniformly on $A_r$ for all $z_0\in K$ and
\begin{align*}
\int_{A_r} |\phi(w)|\left|\psi^1_{r,z_0}(w)\right|  dV(w)
= &-\frac{2r^2}{(1-r^2) |z_0|^2}\int_r^1\frac{\log\rho}{\rho}  d\rho \\
=&\frac{r^2(\log r)^2}{(1-r^2)|z_0|^2} \to 0
\end{align*}
uniformly on $K$ as $r\to 0^+$. Finally, since
$\psi^3_{r,z_0}$ stays bounded uniformly on $A_r$ while
$\sup\{|\psi^2_{r,z_0}(w)|:z_0\in K,w\in A_r\}\to 0$
 as $r\to 0^+$ we get
\[\int_{A_r}\phi(w) \left|\psi^2_{r,z_0}(w)\right|^2 dV(w)\to 0\] 
and 
\[ \int_{A_r}\phi(w)\psi^2_{r,z_0}(w) \overline{\psi^3_{r,z_0 } (w)}  dV(w) \to 0\]
uniformly on $K$ as $r\to 0^+$. Therefore, we showed that
\[\sup\left\{\left|\int_{A_r}\phi(w)\Psi_{r,z_0}(w)dV(w)\right|:
    z_0\in K\right\} \to 0 \text{ as } r\to 0^+.\]

Now we turn to $\int_{A_r}\phi(w)\left|\psi^0_{r,z_0}(w)\right|^2 dV(w)$.
\begin{align*}
\int_{A_r}\phi(w)\left|\psi^0_{r,z_0}(w)\right|^2 dV(w)
=\frac{1}{2\pi |z_0|^2(\log r)^2}\int_r^1 \frac{\log \rho}{\rho} d\rho
=-\frac{1}{4\pi |z_0|^2}.
\end{align*}
Finally,
\[K^{A_r}(z_0,z_0)\to K^{\mathbb{D}}(z_0,z_0)
    =\frac{1}{\pi(1-|z_0|^2)^2}\]
uniformly for all  $z_0\in K$ as $r\to 0^+$ and
\[\sup\left\{\left|\psi^3_{r,z_0}(w)\right|^2
- \left|K^{\mathbb{D}}(w,z_0)\right|^2:z_0\in K,
    w\in \mathbb{D}\right\}\to 0\]
as $r\to 0^+$. Therefore, we have
 \begin{align*}
B_{A_r}\phi(z_0)
=& \int_{A_r}\phi(w)\frac{|K^{A_r}(w,z_0)|^2}{K^{A_r}(z_0,z_0)}dV(w)\\
=& \int_{A_r}\phi(w)\frac{|\psi^0_{r,z_0}(w)|^2}{K^{A_r}(z_0,z_0)}dV(w)
+\int_{A_r}\phi(w)\frac{|\psi^3_{r,z_0}(w)|^2}{K^{A_r}(z_0,z_0)}dV(w)\\
&+\int_{A_r}\phi(w)\frac{\Psi_{r,z_0}(w)}{ K^{A_r}(z_0,z_0)}dV(w)
 \end{align*}
and
\[B_{A_r}\phi(z_0)\to -\frac{(|z_0|^2-1)^2}{4|z_0|^2}+B_{\mathbb{D}}\phi(z_0)
=\frac{|z_0|^2}{4}-\frac{1}{4|z_0|^2}\]
uniformly on $K$ as $r\to 0^+$  because Lemma \ref{Lemma2} implies that
$B_{\mathbb{D}}\phi(z_0)=\frac{1}{2}(|z_0|^2-1)$. Therefore, we showed that
\[B_{A_r}\phi(z)\to \frac{|z|^2}{4}-\frac{1}{4|z|^2}\]
uniformly on compact subsets of $\mathbb{D}\setminus\{0\}$ as $r\to 0^+$.
\end{proof}

%%%%%%%%%%%%%%%%%%%%%%%%%%%%%
\begin{proof}[Proof of Proposition \ref{PropLp}]
The functions $\{e_n:n=0,1,2,\ldots\}$ form an orthonormal basis
for $A^2(\disk^*)$ where $e_n(z)=\sqrt{\frac{n+1}{\pi}}z^n$.
Using integration by parts, we compute
\[T_{\phi}e_n(z)
=\left(2(n+1)\int_0^1 r^{2n+1}\log rdr\right)z^n
=-\frac{z^n}{2n+2}
=-\frac{\sqrt{\pi}}{2(n+1)^{3/2}} e_n(z).\]
Hence, $T_{\phi}$ is a compact diagonal operator on $A^2(\disk^*)$
and by \cite[Theorem 6]{Englis92} it is in the Toeplitz algebra.

Let $f(z)=\frac{|z|^2}{4}-\frac{1}{4|z|^2}$. Proposition \ref{PropAsymp}
implies that for any $\ep>0$ and any compact set
$K\Subset \mathbb{D}\setminus \{0\}$ we can choose  $r_0>0$
sufficiently small so that $K\Subset A_r$ and
\begin{align*}
\|EB_{A_r}T^{A_r}_{\phi}\|^p_{L^p(\disk^*)}
&=\int_{A_r}|B_{A_r}\phi(z)|^pdV(z)
\geq \int_K|B_{A_r}\phi(z)|^pdV(z)\\
&\geq \int_K|f(z)|^pdV(z)-\ep
\end{align*}
for all $0<r\leq r_0$. Then
\[\liminf_{r\to 0^+}\|EB_{A_r}T^{A_r}_{\phi}\|^p_{L^p(\disk^*)}
\geq \|f\|_{L^p(K)}^p-\ep.\]
Since $K$ and $\ep$ are arbitrary, we conclude that
\[\liminf_{r\to 0^+}\|EB_{A_r}T^{A_r}_{\phi}\|^p_{L^p(\disk^*)}\\
\geq \|f\|^p_{L^p(\disk^*)}.\]
Furthermore, one can show that
$\|f\|_{L^p(\disk^*)}=\infty$ if and only if $p\geq 1$.
Therefore,
\[\lim_{r\to 0^+}\|EB_{A_r}T^{A_r}_{\phi}\|_{L^p(\disk^*)}=\infty.\]
Finally, $\|B_{\disk^*}T_{\phi}\|_{L^p(\disk^*)}<\infty$
for all $1\leq p\leq \infty$ because Lemma \ref{Lemma2} implies that
 $B_{\disk^*}T_{\phi}= (|z|^2-1)/2$.
\end{proof}

%\singlespace
%\bibliographystyle{amsalpha}
%\bibliography{Berezin}

\end{document}